\newcommand{\B}{\ensuremath{\mathrm{B}}}
\newcommand{\C}{\ensuremath{\mathbb{C}}}
\newcommand{\E}{\ensuremath{\mathbb{E}}}
\newcommand{\F}{\ensuremath{\mathbb{F}}}
\let\H\relax 
\newcommand{\H}{\mathrm{H}} 
\let\L\relax 
\newcommand{\L}{\mathrm{L}}
\newcommand{\Q}{\ensuremath{\mathbb{Q}}}
\newcommand{\R}{\ensuremath{\mathbb{R}}}
\let\S\relax 
\newcommand{\S}{\mathrm{S}}
\let\P\relax 
\newcommand{\P}{\mathrm{P}}
\newcommand{\p}{\mathrm{p}}
\newcommand{\T}{\ensuremath{\mathbb{T}}}
\newcommand{\Z}{\ensuremath{\mathbb{Z}}}
\newcommand{\e}{\mathrm{e}}
\newcommand{\w}{\mathrm{w}} 
\renewcommand{\d}{\mathop{}\mathopen{}\mathrm{d}} 
\newcommand{\ran}{\ensuremath{\mathop{\rm Ran\,}}}
\newcommand{\tr}{\ensuremath{\mathop{\rm Tr\,}\nolimits}}
\let\i\relax 
\newcommand{\i}{\mathrm{i}}
\newcommand{\Id}{\mathrm{Id}} 
\newcommand{\VN}{\mathrm{VN}} 
\renewcommand{\leq}{\ensuremath{\leqslant}}
\renewcommand{\geq}{\ensuremath{\geqslant}}
\newcommand{\qed}{\hfill \vrule height6pt  width6pt depth0pt}
\newcommand{\norm}[1]{ \| #1  \|}
\newcommand{\bnorm}[1]{ \big\| #1  \big\|}
\newcommand{\Bgnorm}[1]{ \Bigg\| #1  \Bigg\|}
\newcommand{\xra}{\xrightarrow} 
\newcommand{\co}{\colon}
\newcommand{\ot}{\otimes}
\newcommand{\ovl}{\overline}
\newcommand{\otvn}{\ovl\ot}
\newcommand{\ul}{\mathcal{U}}
\newcommand{\epsi}{\varepsilon}
\newcommand{\QWEP}{\mathrm{QWEP}}
\newcommand{\OUMD}{{\rm OUMD}}
\DeclareMathOperator{\Ran}{Ran} 
\DeclareMathOperator{\Conv}{Conv} 
\DeclareMathOperator{\ad}{ad}
\DeclareMathOperator{\Dom}{Dom}
\newtheorem{thm}{Theorem}[section]
\newtheorem{defi}[thm]{Definition}
\newtheorem{prop}[thm]{Proposition}
\newtheorem{cor}[thm]{Corollary}
\newtheorem{lemma}[thm]{Lemma}
\newtheorem{remark}[thm]{Remark}
\newenvironment{rk}{\begin{remark}\rm}{\end{remark}}
\newenvironment{proof}[1][]{\noindent {\it Proof #1}\hspace{-0.04cm}: }{\hbox{~}\qed
\smallskip
}
\numberwithin{equation}{section}
\begin{document}
\selectlanguage{english}
\title{\bfseries{Dilations of semigroups on von Neumann algebras and noncommutative $\L^p$-spaces}}
\date{}
\author{\bfseries{Cédric Arhancet}}

\maketitle

\begin{abstract}
We prove that any weak* continuous semigroup $(T_t)_{t \geq 0}$ of factorizable Markov maps acting on a von Neumann algebra $M$ equipped with a normal faithful state can be dilated by a group of Markov $*$-automorphisms analogous to the case of a single factorizable Markov operator, which is an optimal result. We also give a version of this result for strongly continuous semigroups of operators acting on noncommutative $\L^p$-spaces and examples of semigroups to which the results of this paper can be applied. Our results imply the boundedness of the McIntosh's $\H^\infty$ functional calculus of the generators of these semigroups on the associated noncommutative $\L^p$-spaces generalising some previous work from Junge, Le Merdy and Xu. Finally, we also give concrete  dilations for Poisson semigroups which are even new in the case of $\R^n$.

\bigskip
\end{abstract}


\makeatletter
 \renewcommand{\@makefntext}[1]{#1}
 \makeatother
 \footnotetext{
 2010 {\it Mathematics subject classification:}
 Primary 47A20, 47D03, 46L51 ; Secondary 47D07.
\\
{\it Key words and phrases}: Markov semigroups, dilations, von Neumann algebras, functional calculus.}

\section{Introduction}
\label{sec:Introduction}

The study of dilations of operators is of central importance in operator theory and has a long tradition in functional analysis. Indeed, dilations are very useful tools which allow to reduce general studies of operators to more tractable ones. Suppose $1<p<\infty$. In the spirit of Sz.-Nagy's dilation theorem for contractions on Hilbert spaces, a classical result from the 70's essentially due to Akcoglu \cite{AkS} (see also \cite{Pel1} and \cite{KNP}) says that a positive contraction $T \co \L^p(\Omega) \to \L^p(\Omega)$ on an $\L^p$-space $\L^p(\Omega)$ admits a positive isometric dilation $U$ on a bigger $\L^p$-space than the initial $\L^p$-space, i.e. there exist another measure space $\Omega'$, two positive contractions $J \co \L^p(\Omega) \to \L^p(\Omega')$ and $P \co \L^p(\Omega') \to \L^p(\Omega)$ and a positive invertible isometry $U \co \L^p(\Omega') \to \L^p(\Omega')$ such that 
$$
T^k = PU^kJ
$$ 
for any integer $k \geq 0$. Note that in this situation, $J$ is an isometric embedding whereas $JP$ is a contractive projection.

Later, Fendler \cite{Fen1} proved a continuous version of this result for any strongly continuous semigroup $(T_t)_{t \geq 0}$ of positive contractions on an $\L^p$-space $\L^p(\Omega)$. More precisely, this theorem says that there exists a measure space $\Omega'$, two positive contractions $J \co \L^p(\Omega) \to \L^p(\Omega')$ and $P \co \L^p(\Omega') \to \L^p(\Omega)$ and a strongly continuous group of positive invertible isometries $(U_t)_{t \in \R}$ on $\L^p(\Omega')$ such that 
$$
T_t = PU_tJ
$$ 
for any $t \geq 0$, see also \cite{Fen2} and \cite{Kon1}. 

In the noncommutative setting, measure spaces and $\L^p$-spaces are replaced by von Neumann algebras and noncommutative $\L^p$-spaces and positive maps by completely positive maps. In their remarkable paper \cite{JLM}, Junge and Le Merdy showed that there exists no ``reasonable'' analog of Akcoglu result for completely positive contractions acting on noncommutative $\L^p$-spaces. It is a \textit{striking} difference with the world of classical (=commutative) $\L^p$-spaces of measure spaces. 

Independently, Kümmerer, Maasen, Haagerup and Musat introduced and studied dilations of well-behaved completely positive unital operators on noncommutative probability spaces (=von Neumann algebras equipped with states), the so-called Markov operators \cite{Kum1} \cite{Kum2} \cite{Kum3} \cite{KuM} \cite{HaM} \cite{HaM2}. These dilations induce dilations on the associated noncommutative $\L^p$-spaces and we can see these dilations as reasonable substitutes of dilations furnished by Akcoglu's theorem. The following definition of these operators is considered in \cite{AnD}, \cite{HaM} and \cite{Ric}.

\begin{defi}
\label{def-Markov}
Let $(M,\phi)$ and $(N,\psi)$ be von Neumann algebras equipped with normal faithful states $\phi$ and $\psi$, respectively. A linear map $T\co M \to N$ is called a $(\phi,\psi)$-Markov map if
\begin{enumerate}
\item [$(1)$] $T$ is completely positive
\item [$(2)$] $T$ is unital
\item [$(3)$] $\psi\circ T=\phi$
\item [$(4)$] for any $t \in \R$ we have $T \circ \sigma_t^{\phi}=\sigma_t^\psi \circ T$ where $(\sigma_t^{\phi})_{t \in \R}$ and $(\sigma_t^{\psi})_{t \in \R}$ denote the automorphism groups of the states $\phi$ and $\psi$, respectively.
\end{enumerate}
\end{defi}
In particular, when $(M,\phi)=(N,\psi)$, we say that $T$ is a $\phi$-Markov map. Such an operator $T$ induces a contraction $T_p \co \L^p(M) \to \L^p(N)$ between the associated noncommutative $\L^p$-spaces $\L^p(M)$ and $\L^p(N)$ for any $1 \leq p<\infty$, see for example \cite[Theorem 5.1]{HJX}. 

The following definition is essentially due to Kümmerer, see \cite[Definition 2.1.1]{Kum2} and \cite[Definition 4.1]{HaM} and describes a form of dilation suitable for Markov maps in the style of Akcoglu's dilations. We refer to \cite{Kum1} \cite{Kum2} \cite{Kum3} \cite{KuM} for physical interpretations of this notion.

\begin{defi}
\label{def-T-dilatable}
Let $M$ be a von Neumann algebra equipped with a normal faithful finite state $\phi$ and let $T \co M \to  M$ be a $\phi$-Markov map. We say that $T$ is dilatable if there exists a von Neumann algebra $N$ equipped with a normal faithful state $\psi$, a $*$-automorphism $U$ of $N$ leaving $\psi$ invariant and a $(\phi,\psi)$-Markov $*$-monomorphism $J \co  M \to N$ satisfying
$$
T^k 
=\E U^k J, \qquad k \geq 0.
$$
where $\E=J^* \co N \to M$ is the canonical faithful normal conditional expectation preserving the states associated with $J$.
\end{defi}
In the situation of Definition \ref{def-T-dilatable}, the $*$-automorphism $U \co N \to N$ commutes with the modular automorphism group $(\sigma_t^\psi)_{t \in \R}$ of $\psi$ by \cite[Corollary 1.4 page 95]{Tak2}. Consequently, $U$ is a $\psi$-Markov map.

Note that Haagerup and Musat \cite[Theorem 4.4]{HaM} have succeeded in characterizing dilatable Markov maps. Indeed, they proved that a $\phi$-Markov map $T \co M \to M$ is dilatable if and only if $T$ is factorizable in the sense of \cite{AnD}, i.e. there exists a von Neumann algebra $N$ equipped with a faithful normal state $\psi$ and $(\phi,\psi)$-Markov $*$-monomorphisms $J_0 \co M \to N$ and $J_1 \co M \to N$ satisfying $T=J_0^*\circ J_1$ where $J_0^*$ denotes the adjoint defined below in \eqref{Adjoint}.

Now, we introduce the continuous version of this definition from \cite[Definition 1.3]{Arh2} inspired by Fendler result, see also \cite[Definition page 4]{KuM}. 
\begin{defi}
\label{Def QWEP dilatable} 
Let $M$ be a von Neumann algebra equipped with a normal faithful state $\phi$. Let $(T_t)_{t\geq 0}$ be a weak* continuous semigroup of $\phi$-Markov maps on $M$. We say that the semigroup is dilatable if there exists a von Neumann algebra $N$ equipped with a normal faithful state $\psi$, a weak* continuous group $(U_t)_{t \in \R}$ of $*$-automorphisms of $N$ leaving $\psi$ invariant, a $(\phi,\psi)$-Markov $*$-monomorphism $J \co M \to N$  satisfying
\begin{equation}
\label{Equa-Dilation-semigroup}
T_t
=\E U_t J,\qquad t \geq 0,
\end{equation}
where $\E=J^* \co N \to M$ is the canonical faithful normal conditional expectation preserving the states associated with $J$.
\end{defi}

Note that such a dilation induces an isometric dilation similar to the one of Fendler's theorem for the strongly continuous semigroup induced by the semigroup $(T_t)_{t\geq 0}$ on the associated noncommutative $\L^p$-space $\L^p(M)$ for any $1 \leq p <\infty$.

Haagerup and Musat have given a necessary condition for the existence of such a dilation. Indeed, by \cite[Theorem 4.4]{HaM}, the situation of the above definition implies that each operator $T_t \co M \to M$ is factorizable. Moreover, they constructed \cite[Theorem 3.4]{HaM} a semigroup $(T_t)_{t \geq 0}$ of Markov maps such that not all $T_t$ are factorizable, hence without dilation, leaving implicitly open the question of whether this condition of factorizability is sufficient for the existence of a dilation. Our main result is the following theorem which shows that this condition is equally sufficient.


\begin{thm}
\label{thm-dilation-semigroup-weak-star}
Let $M$ be a von Neumann algebra equipped with a normal faithful state $\phi$. Let $(T_t)_{t \geq 0}$ be a weak* continuous semigroup of factorizable $\phi$-Markov maps on $M$. Then the semigroup $(T_t)_{t \geq 0}$ is dilatable.
\end{thm}

The semigroups of selfadjoint Markov Fourier multipliers plays a fundamental role in noncommutative harmonic analysis and operator algebras (see e.g. \cite{Haa1}) and this result implies, by combining some results of Ricard \cite{Ric}, in particular that these semigroups are dilatable, see Corollary \ref{thm-Dilation-semigroup-Fourier-multipliers}. We also prove Theorem \ref{thm-dilation-Fendler} which is a variant of this result for noncommutative $\L^p$-spaces useful even for non-$\sigma$-finite von Neumann algebras. See also \cite{JRS} for related results.


One of the important consequences of Fendler's theorem is the boundedness, for the generators of strongly continuous semigroup $(T_t)_{t \geq 0}$ of positive contractions, of a bounded $\H^\infty$ functional calculus which is a fundamental tool in various areas: harmonic analysis of semigroups, multiplier theory, Kato's square root problem, maximal regularity in parabolic equations, control theory, etc. For detailed information, we refer the reader to \cite{Haa}, \cite{JMX}, \cite{KW} and to the recent book \cite{HvNVW2} and to the references therein. Our theorem also gives a similar result on $\H^\infty$ functional calculus in the noncommutative context. In particular, our natural and straightforward approach allows us to recover some of the main results of the fundamental memoir \cite{JMX} on functional calculus in few pages. More significantly, combined with some results of the author \cite{Arh2}, it even allows to give generalizations to vector-valued noncommutative $\L^p$ spaces. We refer to Section \ref{sec:Applications} for precise statements of our results.

In the opposite direction of our abstract and non-constructive proof of our main result, we also present some very concrete dilations for some particular cases: classical Poisson semigroups on $\L^\infty(\R^n)$ and on $\L^\infty(\T^n)$ and noncommutative Poisson semigroups on von Neumann algebras $\VN(\F_n)$ of free groups $\F_n$ and semigroups of Hamiltonians.

The paper is organized as follows. The next section gives background. Section \ref{sec:Dilations-VN} gives a proof of Theorem \ref{thm-dilation-semigroup-weak-star}. In Section \ref{Concrete}, we present some concrete dilations for some classical semigroups. In the following section \ref{sec:dilations-Lp}, we describe and prove a noncommutative $\L^p$ analog of this result. In section \ref{sec:exemples}, we consider the case of semigroups of Fourier multipliers. Finally, we conclude in section \ref{sec:Applications} with applications of our results to functional calculus.

\section{Preliminaries}
\label{sec:Preliminaries}
\paragraph{Noncommutative $\L^p$-spaces}
We use Haagerup noncommutative $\L^p$-spaces. We refer to the survey \cite{PiX} and to the papers \cite{Ray2}, \cite{JMX} and \cite{Pis5} for more information. 

Let $M$ be a von Neumann algebra equipped with a normal faithful state $\varphi$. Let $D_{\varphi}$ the density operator associated with $\varphi$. If $1 \leq p <\infty$, recall that by \cite[Lemma 1.1]{JuX1} and \cite[Corollary 4]{Wat1}, $D^{\frac{1}{2p}}_{\varphi} M  D^{\frac{1}{2p}}_{\varphi}$ is a dense subspace of $\L^p(M)$.

Suppose that $N$ is another von Neumann algebra equipped with a normal faithful state $\psi$. Consider a unital positive map $T \co M \to N$ such that $\psi(T(x)) =\varphi(x)$ for any $x \in M_+$. Given $1 \leq p<\infty$ define
\begin{equation}
\label{Map-extension-Lp}
\begin{array}{cccc}
  T_p  \co &    D^{\frac{1}{2p}}_{\varphi} M  D^{\frac{1}{2p}}_{\varphi}  &  \longrightarrow   &  D^{\frac{1}{2p}}_{\psi} N  D^{\frac{1}{2p}}_{\psi}  \\
           &   D^{\frac{1}{2p}}_{\varphi} x D^{\frac{1}{2p}}_{\varphi}  & \longmapsto &  D^{\frac{1}{2p}}_{\psi}T(x) D^{\frac{1}{2p}}_{\psi}  \\
\end{array}.
\end{equation}
By \cite[Theorem 5.1]{HJX}, the map $T_{p}$ above extends to a contractive map from $\L^p(M)$ into $\L^p(N)$.

\paragraph{Markov operators}
By \cite[Proposition 5.4]{HJX} and \cite[Remark page 249]{AcC}, a linear map $T \co M \to N$ satisfying the first three conditions of Definition \ref{def-Markov} is automatically normal. If, moreover, condition $(4)$ is satisfied, then it was proved in \cite{AcC} (see also \cite[Lemma 2.5]{AnD}) that there exists a unique linear map $T^* \co N \to M$ such that
\begin{equation}
\label{Adjoint}
\phi\big(T^*(y)x\big) 
=\psi\big(y T(x)\big), \quad x\in M, y\in N. 
\end{equation}
It is well-known that $T^*$ is a $(\psi,\phi)$-Markov map. A special case of interest is the one of a $(\phi,\psi)$-Markov map $J \co M \to N$ which is a $*$-monomorphism. In this case, the adjoint $J^* \co N \to M$ is the canonical normal faithful conditional expectation $\E \co N \to M$ preserving the states associated with $J$, see \cite[Remark 1.2]{HaM}. Moreover, we say that a $\phi$-Markov map $T \co M \to M$ is selfadjoint if $T=T^*$.

It is not difficult to prove the following elementary observation.  

\begin{lemma} 
\label{Lemma-injectivity}
Let $(M,\phi)$ and $(N,\psi)$ be von Neumann algebras equipped with normal faithful states $\phi$ and $\psi$, respectively. A $(\phi,\psi)$-Markov $*$-homomorphism $T \co M\to N$ is always injective. 
\end{lemma}

\begin{proof}
Consider $x \in M^+$. Suppose $T(x)=0$. We have $\phi(x)=\psi(T(x))=0$. Hence $x=0$ by the positivity of $T$ and the faithfulness of $\phi$. Now if $y \in M$ satisfies $T(y)=0$. We have $T(y)^*T(y)=0$. Since $T$ is a $*$-homomorphism, we infer that $T(y^*y)=0$. We deduce that $y^*y=0$ and therefore that $y=0$.
\end{proof}

We will use the following Lemma left to the reader.

\begin{lemma}
\label{Lemma-prop-stabilite}
Let $(M,\phi)$, $(N,\psi)$ and $(P,\varphi)$ be von Neumann algebras equipped with normal faithful states $\phi$, $\psi$ and $\varphi$, respectively. The set of $(\phi,\psi)$-Markov maps from $M$ into $N$ is convex, closed for the point weak* topology\footnote{\thefootnote. If $X$ is a dual Banach space with predual $X_*$, the point weak* topology on $\B(X)$ is the topology of pointwise convergence on $X$ endowed with the $\sigma(X,X_*)$-topology, i.e. a net $(T_i)$ in $\B(X)$ converges to a point $T \in \B(X)$ for this topology if and only if for any $x \in X$ and any $y \in X_*$ we have $\langle y, T_i(x) \rangle_{X_*,X} \xra[i]{}\langle y, T(x) \rangle_{X_*,X}$.}. Moreover the composition of a $(\phi,\psi)$-Markov map and of a $(\psi,\varphi)$-Markov map is a $(\phi,\varphi)$-Markov map.
\end{lemma}

\paragraph{Ultraproducts of Banach spaces}
Let $(X_n)_{n \geq 1}$ be a sequence of Banach spaces, and let $\ell^{\infty}(\mathbb{N},X_n)$ be the Banach space of all sequences $(x_n)_{n \geq 1} \in \prod_{n=1}^{\infty} X_n$ with $\sup_{n \geq 1} \norm{x_n}_{X_n} < \infty$ equipped with the norm $\norm{(x_n)_{n \geq 1}}_{\ell^{\infty}(\mathbb{N},X_n)}=\sup_{n \geq 1} \norm{x_n}_{X_n}$. Let $\ul$ be a free ultrafilter on $\mathbb{N}$. The Banach space ultraproduct $(X_n)^{\ul}$ is defined as the quotient $\ell^{\infty}(\mathbb{N},X_n)/\mathcal{J}_{\ul}$, where $\mathcal{J}_{\ul}$ is the closed subspace of all $(x_n)_{n \geq 1}\in \ell^{\infty}(\mathbb{N},X_n)$ which satisfies $\lim_{n \to \ul} \norm{x_n}_{X_n}=0$. An element of $(X_n)^{\ul}$ represented by $(x_n)_{n \geq 1} \in \ell^{\infty}(\mathbb{N},E)$ is written as $(x_n)^{\ul}$. For any $(x_n)^{\ul} \in (X_n)^{\ul}$, one has $\norm{(x_n)^{\ul}}=\lim_{n \to \ul} \norm{x_n}_{X_n}$.  If $(T_n \co X_n  \to Y_n)_{n \geq 1}$ is a bounded sequence of bounded linear operators, we can define the ultraproduct map $T \co (X_n)^\ul \to (Y_n)^\ul $, $(x_n)^\ul \mapsto (T_n(x_n))^\ul$. We refer to \cite[section 8]{DJT} for more information.

If $1 \leq p <\infty$, a ultraproduct of noncommutative $\L^p$-spaces is a noncommutative $\L^p$-space, see \cite{Ray1}. However, the Banach space ultraproduct of von Neumann algebras is not a von Neumann algebra in general. 

\paragraph{Ultraproducts of von Neumann algebras}

If $\phi$ is a normal faithful state on a von Neumann algebra $M$, we define $\norm{\cdot}_\phi^\sharp$ and $\norm{\cdot}_\phi$ by
\begin{equation}
\label{Norms}
\norm{x}_\phi^\sharp
=\big(\phi(x^*x+xx^*)\big)^{\frac{1}{2}},\quad \norm{x}_\phi
=\big(\phi(x^*x)\big)^{\frac{1}{2}},\quad x \in M.
\end{equation}
On bounded sets, the (relative) topology associated to the norm $\norm{\cdot}_\phi^\sharp$ coincide with  the (relative) topology of $\sigma$-strong* topology, see \cite[Section 2]{AnH}, \cite[Lemma 1.11.2]{Bin} and the discussion from \cite[page 20]{Bin}. 

Let us now define the Ocneanu ultraproduct $(M_{n},\phi_n)^\ul$ of a sequence $(M_n,\phi_n)_{n \geq 1}$ of $\sigma$-finite von Neumann algebras equipped with normal faithful states $\phi_n$ with respect to a free ultrafilter $\mathcal{U}$ over $\mathbb{N}$. Let $\ell^{\infty}(\mathbb{N},M_n)$ be the C*-algebra of sequences $(x_{n})_{n \geq 1}$ of $\prod\nolimits_{n=1}^{\infty} M_{n}$ such that $\sup_{n \geq 1}\left\Vert x_{n}\right\Vert_{M_n} <+\infty$ endowed with the norm $\left\Vert (x_{n})\right\Vert_{\ell^{\infty}(\mathbb{N},M_n)} =\sup_{n  \geq 1}\left\Vert x_{n}\right\Vert_{M_n}$. Let $\ul$ be free ultrafilter on $\mathbb{N}$. We let
$$
\mathcal{I}_{\ul}(M_n,\phi_n)
:=\left\{ (x_n)_{n \geq 1} \in \ell^{\infty}(\mathbb{N},M_n)\ : \ \norm{x_n}_{\phi_n}^{\sharp}\xra[n \to \ul]{} 0 \right\}.
$$
If $(x_n)_{n \geq 1}$ is a bounded sequence of $\ell^{\infty}(\mathbb{N},M_n)$, note the following equivalence
\begin{equation}
\label{Equivalence}            
\norm{x_n}_{\phi_n}^{\sharp} \xra[n \to \ul]{} 0 
\iff \norm{x_n}_{\phi_n} \xra[n \to \ul]{} 0
\text{ and } \norm{x_n^*}_{\phi_n} \xra[n \to \ul]{} 0.
\end{equation}
With the abbreviated notation $\mathcal{I}_{\ul}$ for $\mathcal{I}_{\ul}(M_n,\phi_n)$, we define the following subspace
$$
\mathcal{M}^{\ul}(M_n,\phi_n)
:=\big\{(x_n)_{n \geq 1} \in \ell^{\infty}(\mathbb{N},M_n)\ :\ (x_n) \mathcal{I}_{\ul} \subset \mathcal{I}_{\ul}, \text{ and }\mathcal{I}_{\ul}(x_n) \subset \mathcal{I}_{\ul}\big\}.
$$
Then $\mathcal{M}^{\ul}(M_n,\phi_n)$ is a C*-algebra (with pointwise operations and supremum norm) in which $\mathcal{I}_{\ul}(M_n,\phi_n)$ is a closed ideal. We then define the quotient C*-algebra 
$$
(M_n,\phi_n)^{\ul}
:=\mathcal{M}^{\ul}(M_n,\phi_n)/\mathcal{I}_{\ul}(M_n,\phi_n).
$$
Then $(M_n,\phi_n)^{\ul}$ is a W*-algebra. We denote the image of an element $(x_n)_{n \geq 1}$ of $\mathcal{M}^{\ul}(M_n,\phi_n)$ in $(M_n,\phi_n)^{\ul}$ by $(x_n)^{\ul}$. Finally, we can define a normal faithful state $(\phi_n)^{\ul}$ on $(M_n,\phi_n)^{\ul}$ by
$$
(\phi_n)^{\ul}\big((x_n)^{\ul}\big)
:=\lim_{n \to \ul} \phi_n(x_n),\quad (x_n)^{\ul} \in (M_n,\phi_n)^{\ul}.
$$
See \cite{AHW}, \cite{AnH} and \cite {Ocn1} for more information. In the particular case where the von Neumann algebras $M_n$ are finite and where the states $\phi_n$ are normal faithful tracial states, we recover the tracial ultraproduct described in \cite[Section 9.10]{Pis6} (see also \cite[Appendix A]{SS}) and $(\phi_n)^{\ul}$ is a normal faithful tracial state. Finally, in the case of constant sequence $M_n = M$, $\phi_n =\phi$, the ultraproduct $(M_n,\phi_n)^{\ul}$ is written as $M^\ul$ and called the ultrapower of $(M,\phi)$ and we write $\phi^{\ul}$ for $(\phi)^\ul$.

The modular automorphism group $(\sigma_t^{(\phi_n)^{\ul}})_{t \in \R}$ of the ultraproduct state $(\phi_n)^{\ul}$ is described in \cite[Theorem 4.1]{AnH}. For any $t \in \R$ and any $(x_n)^{\ul} \in M^{\ul}$, we have 
\begin{equation}
\label{Modular-ultraproduct}
\sigma_t^{(\phi_n)^{\ul}}\big((x_n)^{\ul}\big)
=\big(\sigma_t^{\phi_n}(x_n)\big)^{\ul}.	
\end{equation}


\begin{prop}
\label{Prop-ultramap}
Let $(M_n,\phi_n)_{n \geq 1}$ and $(N_n,\psi_n)_{n \geq 1}$ be sequences of $\sigma$-finite von Neumann algebras equipped with normal faithful states. If $(J_n \co M_n \to N_n)_{n \geq 1}$ is a sequence of $(\phi_n,\psi_n)$-Markov $*$-monomorphism then the map $(J_n)^\ul \co (M_{n})^\ul \to (N_{n})^\ul$, $(x_n)^\ul \mapsto (J_n(x_n))^\ul$ is a well-defined $((\phi_n)^\ul,(\psi_n)^\ul)$-Markov $*$-monomorphism. Moreover, if we denote by $\E_n \co N_n \to M_n$ the canonical normal faithful conditional expectation preserving the states associated with $J_n$ then the canonical conditional expectation $(\E_n)^\ul \stackrel{\text{def}}{=} ((J_n)^\ul)^* \co (N_{n})^\ul \to (M_{n})^\ul$ satisfies for any element $(y_n)^\ul$ of $(N_{n})^\ul$
\begin{equation}
\label{Esperance-ul}
(\E_n)^\ul\big((y_n)^\ul\big)
=\big(\E_n(y_n)\big)^\ul.	
\end{equation}
\end{prop}

\begin{proof}
We begin to prove that if $(x_n)$ is an element of $\mathcal{M}^{\ul}(M_n,\phi_n)$ then the sequence $(J(x_n))$ is an element of $\mathcal{M}^{\ul}(N_n,\psi_n)$. If $(y_n)$ is an element of $\mathcal{I}_{\ul}(N_n,\psi_n)$, using \eqref{Equivalence} it suffices to check that $(J_n(x_n) y_n) \in \mathcal{I}_{\ul}(N_n,\psi_n)$ and that $(y_n J_n(x_n)) \in \mathcal{I}_{\ul}(N_n,\psi_n)$, that is
$$
\lim_{n \to \ul} \bnorm{(J_n(x_n)y_n)^*}_{\psi_n} 
=0, \quad
\lim_{n \to \ul} \bnorm{J_n(x_n) y_n}_{\psi_n} 
=0, \quad
\lim_{n \to \ul} \bnorm{(y_n J_n(x_n))^*}_{\psi_n} 
=0
$$
and
$$
\lim_{n \to \ul} \bnorm{y_n J_n(x_n)}_{\psi_n} 
=0.
$$
We start with the first limit. We begin to note that
$$
\bnorm{(\E_n(y_n y_n^*))^{\frac{1}{2}}}_{\phi_n} 
=\big(\phi_n\big(\E_n(y_n y_n^*)\big)\big)^{\frac{1}{2}}
=\big(\psi_n(y_n y_n^*)\big)^{\frac{1}{2}}
\xra[n \to \ul]{} 0.
$$
Hence the sequence $((\E_n(y_n y_n^*))^{\frac{1}{2}})$ belongs to $\mathcal{I}_{\ul}(M_n,\phi_n)$. Since $(x_n)$ is an element of $\mathcal{M}^{\ul}(M_n,\phi_n)$, we deduce that the sequence $(x_n \E_n(y_ny_n^*)^{\frac{1}{2}})$ belongs to $\mathcal{I}_{\ul}(M_n,\phi_n)$. Consequently, we obtain
\begin{align*}
\MoveEqLeft
 \bnorm{(J_n(x_n) y_n)^*}_{\psi_n}          
		=\psi_n\big( J_n(x_n) y_n (J_n(x_n) y_n)^*\big)^{\frac{1}{2}}
		=\psi_n\big( J_n(x_n) y_ny_n^* J_n(x_n^*)\big)^{\frac{1}{2}}\\
		&=\phi_n\big(\E_n( J_n(x_n) y_ny_n^* J_n(x_n^*))\big)^{\frac{1}{2}}
		=\phi_n\big(x_n \E_n(y_ny_n^*) x_n^*\big)^{\frac{1}{2}}
		=\phi_n\big(x_n \E_n(y_ny_n^*)^{\frac{1}{2}}\E_n(y_ny_n^*)^{\frac{1}{2}} x_n^*\big)^{\frac{1}{2}}\\
		&=\bnorm{\big(x_n \E_n(y_ny_n^*)^{\frac{1}{2}}\big)^*}_{\phi_n}
		\xra[n \to \ul]{} 0.
\end{align*}
For the second limit, we first observe that
$$
\norm{J_n(x_n) y_n}_{\psi_n}          
=\psi_n\big((J_n(x_n) y_n)^*J_n(x_n) y_n\big)^{\frac{1}{2}}
=\psi_n\big(y_n^* J_n(x_n)^* J_n(x_n) y_n\big)^{\frac{1}{2}}.
$$
Since the sequence $(J_n(x_n))$ is bounded, there exists a constant $C$ such that $\norm{J_n(x_n)}_N  \leq C$ for any integer $n$. Using \cite[3.2 (i) page 269]{ScW1}, we obtain $y_n^* J_n(x_n)^* J_n(x_n) y_n \leq C y_n^* y_n$ and finally
\begin{equation}
	\label{Similar-reasoning}
\psi_n\big(y_n^* J_n(x_n)^* J_n(x_n) y_n\big)^{\frac{1}{2}} 
\leq \sqrt{C}\psi_n\big(y_n^* y_n\big)^{\frac{1}{2}}
\xra[n \to \ul]{} 0.	
\end{equation}
We conclude that $\norm{J_n(x_n) y_n}_{\psi_n} 
\xra[n \to \ul]{} 0$. The two last limits are similar and are left to the reader.

Now the $J_n$'s induce a $*$-homomorphism $\oplus J_n \co \ell^{\infty}(\mathbb{N},M_n) \to \ell^{\infty}(\mathbb{N},N_n)$, $(x_1,x_2,\ldots) \mapsto (J_1(x_1),J_2(x_2),\ldots)$. Using the beginning of the proof, the restriction of this map $\oplus J_n$ gives a well-defined map $\mathcal{M}^{\ul}(M_n,\phi_n) \to \mathcal{M}^{\ul}(N_n,\psi_n)$ an therefore a map $J \co \mathcal{M}^{\ul}(M_n,\phi_n) \to (N_{n})^\ul$. Now if $(x_n)$ is an element of $\ell^{\infty}(\mathbb{N},M_n)$ we have
$$
\norm{J_n(x_n)}_{\psi_n}          
=\psi_n\big((J_n(x_n))^*J_n(x_n)\big)^{\frac{1}{2}}
=\psi_n\big((J_n(x_n^*x_n)\big)^{\frac{1}{2}}
=\phi_n\big(x_n^*x_n\big)^{\frac{1}{2}}
$$
and
$$
\bnorm{(J_n(x_n))^*}_{\psi_n}          
=\psi_n\big(J_n(x_n)(J_n(x_n))^*\big)^{\frac{1}{2}}
=\psi_n\big((J_n(x_nx_n^*)\big)^{\frac{1}{2}}
=\phi_n\big(x_nx_n^*\big)^{\frac{1}{2}}.
$$
We deduce that the kernel $\ker J$ of $J$ is equal to $\mathcal{I}_{\ul}(N_n,\psi_n)$. Hence we obtain a well-defined quotient map $(J_n)^\ul \co (M_{n})^\ul \to (N_{n})^\ul$, $(x_n)^\ul \mapsto (J_n(x_n))^\ul$ which is clearly a unital $*$-monomorphism. Now, we will show that $(J_n)^\ul \co (M_{n})^\ul \to (N_{n})^\ul$ is a $((\phi_n)^\ul,(\psi_n)^\ul)$-Markov map. Using \eqref{Modular-ultraproduct}, for any $t \in \R$ and any element $(x_n)^\ul$ of $(M_{n})^\ul$, we have
\begin{align*}
\MoveEqLeft
 (J_n)^\ul \circ  \sigma_t^{(\phi_n)^{\ul}} \big((x_n)^\ul\big)         
		=(J_n)^\ul\big(\sigma_t^{\phi_n}(x_n)\big)^{\ul}
		=\big(J_n (\sigma_t^{\phi_n}(x_n))\big)^{\ul}\\
		&=\big(\sigma_t^{\psi_n}(J_n(x_n))\big)^{\ul}
		= \sigma_t^{(\psi_n)^{\ul}} \circ (J_n)^\ul\big((x_n)^\ul\big).
\end{align*}
Hence $(J_n)^\ul \circ \sigma_t^{(\phi_n)^{\ul}}=\sigma_t^{(\psi_n)^{\ul}} \circ (J_n)^\ul$, i.e. the map $(J_n)^\ul$ commutes with the modular automorphism groups. Moreover
\begin{align*}
\MoveEqLeft
  (\psi_n)^{\ul}\left((J_n)^\ul\big((x_n)^\ul\big)\right)
=(\psi_n)^{\ul}\left((J_n(x_n))^\ul\right)
=\lim_{n \to \ul} \psi_n(J_n(x_n))
=\lim_{n \to \ul} \phi_n(x_n)
=(\phi_n)^{\ul}\big((x_n)^{\ul}\big).          
\end{align*}  
We infer that $(J_n)^\ul$ preserves the states. Hence $(J_n)^\ul$ is a $((\phi_n)^\ul,(\psi_n)^\ul)$-Markov map. Finally, for any element $(x_n)^\ul$ of $(M_{n})^\ul$ and any $(y_n)^\ul$ of $(N_{n})^\ul$, using \eqref{Adjoint} in the fourth equality, we have 
\begin{align*}
\MoveEqLeft
(\psi_n)^\ul\big((y_n)^\ul (J_n)^\ul((x_n)^\ul)\big)
=(\psi_n)^\ul\big((y_n)^\ul (J_n(x_n))^\ul)\big) \\       
&=(\psi_n)^\ul\big((y_n J_n(x_n))^\ul)\big)  
=\lim_{n \to \ul} \psi_n\big(y_n J_n(x_n)\big)
=\lim_{n \to \ul} \phi_n\big(\E_n(y_n) x_n\big)\\
&=(\phi_n)^\ul\big((\E_n(y_n) x_n)^\ul\big)
=(\phi_n)^\ul\big((\E_n(y_n))^\ul(x_n)^\ul\big).
\end{align*}  
Hence by unicity of $((J_n)^\ul)^*$, we obtain \eqref{Esperance-ul}.
%
%
\end{proof}

The following is probably folklore but we are unable to locate a proof in the literature. So we give a proof because of its importance in this paper.

\begin{prop}
\label{Prop-injection}
Let $M$ be a von Neumann algebra equipped with a normal faithful state $\varphi$. The map $\mathcal{I} \co M \to M^\ul$, $x \mapsto (x,x,\ldots)^\ul$ is a well-defined $\phi$-Markov $*$-monomorphism. Moreover, if we denote by $\E=\mathcal{I}^* \co M^\ul \to M$ the canonical normal faithful conditional expectation preserving the states associated with $\mathcal{I}$ then for any element $(x_n)^\ul$ of $M^\ul$, we have
\begin{equation}
\label{Esperance2-formmule}
\E\big((x_n)^\ul\big)
=\w^*\text{-}\lim_{n \to \ul} x_n.	
\end{equation}
\end{prop}

\begin{proof}
 First, if $x \in M$ then we will show that $(x,x,\ldots)$ belongs to $\mathcal{M}^{\ul}(M,\phi)$. If $(y_n)$ is an element of $\mathcal{I}_{\ul}(M,\phi)$, it suffices to check that $(xy_n) \in \mathcal{I}_{\ul}(M,\phi)$ and that $(y_n x) \in \mathcal{I}_{\ul}(M,\phi)$. By the discussion following \eqref{Norms}, this is equivalent to the convergence of the sequences $(xy_n)$ and $(y_n x)$ along $\ul$ to 0 for the $\sigma$-strong* topology. Recall that by \cite[Lemma 2.5]{Tak1} the topology of $\sigma$-strong* topology coincide with the strong* topology on bounded sets. Using the separate continuity of the operator product for strong operator topology, it is not difficult to obtain the convergence. 
%
%
Hence $\mathcal{I}$ is well-defined. It is clearly a unital $*$-homomorphism. Using \eqref{Modular-ultraproduct}, for any $t \in \R$ and any $x \in M$, we have
\begin{align*}
\MoveEqLeft
 \mathcal{I} \circ  \sigma_t^{\phi}(x)         
=\big(\sigma_t^{\phi}(x),\sigma_t^{\phi}(x),\ldots\big)^\ul
=\sigma_t^{\phi^{\ul}}\big((x,x,\ldots)^\ul\big)
=\sigma_t^{\phi^{\ul}} \circ \mathcal{I}(x).
\end{align*}
Hence $\mathcal{I} \circ \sigma_t^{\phi}=\sigma_t^{\phi^{\ul}} \circ \mathcal{I}$, i.e. the map $\mathcal{I}$ commutes with the modular automorphism groups. Moreover, for any $x \in M$, we have
\begin{align*}
\MoveEqLeft
\phi^{\ul}\left(\mathcal{I}(x)\big)\right)
=\phi^{\ul}\left((x,x,\ldots)^\ul\right)
=\lim_{n \to \ul} \phi(x)
=\phi(x).          
\end{align*}  
We infer that $\mathcal{I}$ preserves the states. Hence the map $\mathcal{I}$ is a $(\phi,\phi^\ul)$-Markov map. By Lemma \ref{Lemma-injectivity}, we obtain the injectivity of $\mathcal{I}$.

Note that for any element $(x_n)$ of $\ell^{\infty}(\mathbb{N},M_n)$, the limit $\w^*\text{-}\lim_{n \to \ul} x_n$ exists. For any $y \in M$ and any element $(x_n)^\ul$ of the ultrapower $M^\ul$, using the fact that the product of a von Neumann algebra is separately weak* continuous in the last equality (e.g. see \cite[Proposition 2.7.4 (1)]{BLM}), we have
\begin{align*}
\MoveEqLeft
\phi^\ul\big((x_n)^\ul \mathcal{I}(y)\big)           
		=\phi^\ul\big((x_n)^\ul (y,y,\ldots)^\ul\big)
		=\phi^\ul\big((x_ny)^\ul\big)
		=\lim_{n \to \ul} \phi(x_ny)\\
		&=\phi\Big(\w^*\text{-}\lim_{n \to \ul} (x_n y)\Big)
		=\phi\Big(\Big(\w^*\text{-}\lim_{n \to \ul} x_n\Big)y\Big).
\end{align*}	
So by unicity of $J^*$, we obtain \eqref{Esperance2-formmule}.
\end{proof}

\paragraph{Convexity} A normed linear space $X$ is locally uniformly convex \cite[Definition 5.3.2]{Meg1} \cite[Definition 0.2]{Lov1} if for any $\epsi>0$ and any $x \in X$ with $\norm{x} =1$ there exists $\delta(\epsi,x)>0$ such that $\norm{y} = 1$ and $\frac{\norm{x+y}}{2} \geq 1-\delta(\epsi,x)$ imply $\norm{y-x} \leq \epsi$. 
 It is clear from the definition that uniform convexity implies local uniform convexity.

\paragraph{Semigroups of operators}
Let $X$ be a Banach space. Recall that a semigroup $(T_t)_{t \geq 0}$ of operators on $X$ is strongly continuous if for any $x \in X$ the map $t \mapsto T_t(x)$ is continuous from $\R^+$ into $X$. 

Let $X$ be a dual Banach space with predual $X_*$. We say that a semigroup $(T_t)_{t \geq 0}$ of bounded operators on $X$ is weak* continuous if the map $t \mapsto T_t$ is continuous from $\R$ into $\B(X)$ equipped with the point weak* topology, i.e. if the map $t \mapsto \big\langle y, T_t(x) \big\rangle_{X_*,X}$ is continuous on $\R^+$ for any $x \in X$ and any $y \in X_*$. 

\paragraph{Representations of groups and kernels}
Let $X$ be a Banach space. Let $\pi \co G \to \B(X)$ be a representation of a group $G$ on $X$. Then we say that $\pi$ is bounded when $\sup \big\{ \norm{\pi_t} : t \in G\big\} < \infty$.

We need some notions and results of the papers \cite{DLG1} and \cite{DLG2}. Recall that a non-empty subset $D$ of a semigroup\footnote{\thefootnote. A semigroup is a set supplied with an associative binary operation. Unfortunately, in this paper, we also use semigroups $(T_t)_{t \geq 0}$ indexed by $\R^+$. For such a semigroup, we require that $T_0=\Id$.} $\mathscr{S}$ is called a two-sided ideal \cite[page 65]{DLG1} \cite[page 318]{EFHN} if $\mathscr{S} D \subset D$ and if $D \mathscr{S} \subset D$. If $\mathscr{S}$ is a semigroup, the intersection of all the two-sided ideals of $\mathscr{S}$ is called the kernel of $\mathscr{S}$ \cite[page 66]{DLG1} \cite[page 318]{EFHN}. If $\mathscr{S}$ is a compact (Hausdorff) semitopological semigroup, that is a semigroup with a separately continuous semigroup operation, then it is known \cite[Theorem 2.3]{DLG1} \cite[Lemma 16.4]{EFHN} that its kernel is non-empty. 

Let $\pi \co G \to \B(X)$ be a (non-continuous) bounded representation of a topological group $G$ on a reflexive Banach space $X$. We denote by 
$$
X_c
=\{x \in X \ :\ t \mapsto \pi_t(x) \text{ is continuous from } G \text{ to } X \text{ equipped with the weak topology}\}
$$
the subspace of continuously translated elements of $X$ for the representation $\pi$, see \cite[Definition 2.1]{DLG2}. By \cite[Corollary 2.9]{DLG2}, if $G$ is locally compact then for any $x \in X_c$ the map $t \mapsto \pi_t(x)$ is continuous.

Let $\mathcal{V}_G(e)$ be the set of all neighbourhoods $V$ of the identity $e$ of $G$. We then set $\mathcal{S}^c(\pi)$ be the closure in the weak operator topology of the convex hull of $\bigcap_{V \in \mathcal{V}_G(e)} \ovl{\{\pi_t: t \in V \}}^{\mathrm{wo}}$ and called the convex semigroup of $\pi$ over the identity $e$ \cite[page 140]{DLG2}\footnote{\thefootnote. In \cite[page 140]{DLG2}, the convex semigroup over $e$ of a representation $U$ is denoted by $S(U)$.}. Then it is known \cite[Lemma 2.3]{DLG2} that $\mathcal{S}^c(\pi)$ endowed with the weak operator topology is a compact semitopological semigroup. The papers \cite{DLG1} and \cite{DLG2} give the following result which will be only used in Section \ref{sec:dilations-Lp}.

\begin{thm}
\label{DLG}
Let $X$ be a reflexive Banach space and $\pi \co G \to \B(X)$ be a (non-continuous) bounded representation of a commutative topological group $G$. Then the kernel $\mathcal{K}(\pi)$ of the convex semigroup $\mathcal{S}^c(\pi)$ of $\pi$ contains a unique idempotent $Q$ and $Q$ is a bounded projection of $X$ on $X_c$ with $Q\pi_t=\pi_tQ$ for any $t \in G$.
\end{thm}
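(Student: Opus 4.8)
The plan is to run the de~Leeuw--Glicksberg-type machinery carried by $\mathcal{S}^c(\pi)$, exploiting commutativity to collapse the kernel to a single idempotent and then to identify its range with $X_c$.

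\smallskip\noindent\emph{Formal preliminaries.} Since $X$ is reflexive, every bounded subset of $B(X)$ is relatively compact for the weak operator ($wo$) topology, so $\mathcal{S}^c(\pi)$ and each $\overline{\{\pi_s:s\in V\}}^{\,wo}$ ($V\in\mathcal{V}(e)$) is $wo$-compact; moreover, for fixed $A\in B(X)$ the maps $S\mapsto AS$ and $S\mapsto SA$ are $wo$-continuous on $B(X)$ and $S\mapsto Sx$ is continuous from $(B(X),wo)$ to $(X,\text{weak})$, and the sets $\{S:SA=AS\}$, $\{S:Sx=x\}$ are $wo$-closed and convex. Because $G$ is commutative, each $\pi_t$ commutes with every $\pi_s$; as $\mathcal{S}^c(\pi)$ is by definition the $wo$-closed convex hull of $\bigcap_{V\in\mathcal{V}(e)}\overline{\{\pi_s:s\in V\}}^{\,wo}$, it follows that each $\pi_t$ commutes with every element of $\mathcal{S}^c(\pi)$, and in particular that $\mathcal{S}^c(\pi)$ is commutative. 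Once $Q$ is produced this already yields $Q\pi_t=\pi_tQ$ for all $t\in G$.

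\smallskip\noindent\emph{The kernel.} $\mathcal{S}^c(\pi)$ is a commutative compact semitopological semigroup, so $\mathcal{K}(\pi)$ is non-empty and, as recalled above, consists of projections. If $e,f$ are idempotents of $\mathcal{K}(\pi)$, then $e\mathcal{K}(\pi)$ and $f\mathcal{K}(\pi)$ are ideals of $\mathcal{S}^c(\pi)$ contained in $\mathcal{K}(\pi)$, hence equal to it by minimality of the kernel; writing $e=fk$ with $k\in\mathcal{K}(\pi)$ gives $fe=f^2k=fk=e$, symmetrically $ef=f$, so $e=f$ by commutativity. Thus $\mathcal{K}(\pi)$ has a unique idempotent $Q$; since all its elements are idempotent, $\mathcal{K}(\pi)=\{Q\}$, and as $Q$ lies in the two-sided ideal $\mathcal{K}(\pi)$ we get $\mathcal{S}^c(\pi)Q\subset\mathcal{K}(\pi)=\{Q\}$, i.e. $SQ=Q$ for every $S\in\mathcal{S}^c(\pi)$. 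Also $Q$, being in $\mathcal{K}(\pi)$, is a bounded projection operator, and it commutes with every $\pi_t$ by the previous paragraph.

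\smallskip\noindent\emph{Identifying the range, and the main obstacle.} For $X_c\subset\ran Q$: given $x\in X_c$ and $\epsi>0$, choose $V\in\mathcal{V}(e)$ with $\{\pi_tx:t\in V\}$ inside the weakly closed $\epsi$-ball about $x$; then every $S\in\bigcap_V\overline{\{\pi_t:t\in V\}}^{\,wo}$ satisfies $\norm{Sx-x}\leq\epsi$, hence $Sx=x$ as $\epsi\to0$; since $\{S:Sx=x\}$ is convex and $wo$-closed it contains $\mathcal{S}^c(\pi)$, so $Qx=x$. For $\ran Q\subset X_c$: from $SQ=Q$ for all $S\in\mathcal{S}^c(\pi)$, any $wo$-cluster point of a net $(\pi_{t_i})$ with $t_i\to e$ lies in $\bigcap_V\overline{\{\pi_t:t\in V\}}^{\,wo}\subset\mathcal{S}^c(\pi)$; multiplying on the right by $Q$ (a $wo$-continuous operation) shows every $wo$-cluster point of $(\pi_{t_i}Q)$ equals $Q$, so by $wo$-compactness $\pi_tQ\to Q$ in the $wo$-topology as $t\to e$, whence $\pi_t(Qx)=Q\pi_tx\rightharpoonup Qx$ as $t\to e$ for every $x$. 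Thus $t\mapsto\pi_t|_{\ran Q}$ is a bounded, \emph{weakly} continuous representation of $G$ on the reflexive space $\ran Q$ (weak continuity at $e$, hence everywhere by the group law). The one genuinely analytic step that remains --- and the point I expect to be the main obstacle --- is to upgrade this weak continuity to norm continuity, i.e. to show each $t\mapsto\pi_t(Qx)$ is norm-continuous, so that $Qx\in X_c$. Here reflexivity must be combined with convexity: after replacing the norm of $X$ by an equivalent $\pi$-invariant, locally uniformly convex norm (which alters neither $B(X)$, nor the $wo$-topology, nor $\mathcal{S}^c(\pi)$, nor the subspace $X_c$), the operators $\pi_t$ become isometries, so $\norm{\pi_t(Qx)}=\norm{Qx}$; together with $\pi_t(Qx)\rightharpoonup Qx$ and the Kadec--Klee property of locally uniformly convex spaces this forces $\pi_t(Qx)\to Qx$ in norm. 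When $X$ is uniformly convex --- e.g. $X=L^p(M)$ with $1<p<\infty$, the case used in the applications --- this is immediate once one has reduced to an isometric representation. Altogether $\ran Q=X_c$; alternatively, this last identification is exactly \cite[Lemma 2.4]{DLG2} together with the remarks preceding it, while everything outside the norm-continuity step is formal manipulation with the weak operator topology and with the minimal-ideal structure of compact semitopological semigroups.
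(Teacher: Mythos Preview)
The paper does not prove this theorem: it is quoted verbatim as a consequence of \cite[Lemma~2.4]{DLG2} (together with \cite[Theorem~7.2]{DLG1} for the fact that the kernel consists of projections), with no argument supplied. So there is no ``paper's own proof'' to compare against beyond those citations, and your final sentence already reduces to exactly the same reference.

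Your reconstruction of the de~Leeuw--Glicksberg machinery is essentially correct and matches the standard route: compactness of $\mathcal{S}^c(\pi)$, commutativity inherited from $G$, uniqueness of the idempotent in the kernel via the minimal-ideal argument, the commutation $Q\pi_t=\pi_tQ$, and the inclusion $X_c\subset\ran Q$ are all handled cleanly. The one place where your self-contained argument has a genuine gap is the renorming step used to upgrade weak to norm continuity. You assert the existence of an equivalent norm on $X$ that is simultaneously $\pi$-invariant \emph{and} locally uniformly convex; for a general reflexive $X$ and a general bounded representation of a commutative group this is not a standard fact, and averaging a LUR norm over an amenable group action need not preserve local uniform convexity. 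For the uniformly convex (in particular $L^p(M)$, $1<p<\infty$) case your Kadec--Klee argument is perfectly valid after passing to the invariant norm $\sup_t\|\pi_t\cdot\|$, which inherits uniform convexity, so nothing in the paper's applications is affected. But for arbitrary reflexive $X$ you should either supply a reference for the invariant LUR renorming or, as you already indicate, simply invoke \cite[Lemma~2.4]{DLG2} for this last identification --- which is precisely what the paper does.
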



\begin{proof}
First, by \cite[page 136]{DLG2} that any bounded (non-continuous) representation of $G$ on a reflexive Banach space is locally weakly almost periodic, that is the assumption written in \cite[page 139]{DLG2} of \cite[Section 2]{DLG2} is satisfied. Since $G$ is abelian, the remark before \cite[Lemma 2.4]{DLG2} says that the kernel of $\mathcal{K}(\pi)$ reduces to a single idempotent $Q$. By \cite[Lemma 2.4]{DLG2}, $Q$ is a bounded projection from $X$ onto $X_c$ with $Q\pi_t=\pi_tQ$ for any $t \in G$. 
\end{proof}

\paragraph{Accumulation points} Let $(y_i)_{i \in I}$ be a net in a topological space $Y$. An accumulation point of the net $(y_i)_{i \in I}$ is an element of the intersection $ \bigcap_{F \in \mathcal{F}} \ovl{F}$ where 
$$
\mathcal{F}
=\big\{F \subset X:\text{ there exists } i_0 \in I \text{ such that }\{y_{i} : i \geq i_0\} \subset F \big\}
$$ 
or equivalently a limit of some subnet of $(y_i)_{i \in I}$.

\section{Dilations of semigroups on von Neumann algebras}
\label{sec:Dilations-VN}

Suppose that $X$ is a dual Banach space $X$ with predual $X_*$. Recall that $\B(X)$ is a dual Banach space with the Banach space $X \hat{\ot} X_*$ as predual where $\hat{\ot}$ denotes the projective tensor product. Note that the weak* topology on $\B(X)$ is different from the point weak* topology. However, these topologies coincide on bounded subsets of $\B(X)$ by \cite[Lemma 7.2]{Pau}. We will often use this identification without saying it.

It is well-known that the space $\B_{\w^*}(X)$ of weak* continuous operator of $\B(X)$ is a semitopological semigroup with respect to the point weak* topology, see \cite[Exercise 1.12 page 251]{BJM} and \cite[Lemma 2.1]{BGKS}.
%
 
Let $G$ be a topological group and let $\pi \co G \to \B(X)$ be a (non-continuous) bounded representation on a dual Banach space $X$. We define the subspace
$$
X_{\w^*}
=\Big\{x \in X \ :\ t \mapsto \big\langle y,\pi_t(x)\big\rangle_{X_*,X} \text{ is continuous from } G \text{ to $\C$ for any $y$} \in X_*\Big\}
$$
of $X$ called subspace of weak* continuously translated elements of $X$. Recall that $\mathcal{V}_G(e)$ denotes the set of all neighbourhoods $V$ of the identity $e$ of $G$. We then set $\mathcal{S}^{\w^*}(\pi)$ to be the closure in the weak* topology of $\B(X)$ of the convex hull of $\bigcap_{V \in \mathcal{V}_G(e)} \ovl{\{\pi_t: t \in V \}}^{\w^*}$, endowed with the weak* topology where the closure is also taken for the weak* topology:
\begin{equation}
\label{Def-S-weak}
\mathcal{S}^{\w^*}(\pi)	
=\ovl{\Conv}^{\w^*}\bigcap_{V \in \mathcal{V}_G(e)} \ovl{\{\pi_t: t \in V \}}^{\w^*}.
\end{equation}
The following proposition is a weak* analogue of \cite[lemma 2.3]{DLG2}.

\begin{prop}
\label{prop-fixed-points=Xwstar}
Let $\pi \co G \to \B(X)$ be a bounded (non-continuous) representation of a topological group $G$ on a dual Banach space $X$. The set $X_{\w^*}$ consists of precisely those $x$ in $X$ which are fixed under all $T$ in $\bigcap_{V \in \mathcal{V}_G(e)} \ovl{\{\pi_t: t \in V \}}^{\w^*}$:
\begin{equation*}
X_{\w^*}
=\Bigg\{ x \in X \ \co \ T(x)=x \text{ for any $T \in \bigcap_{V \in \mathcal{V}_G(e)} \ovl{\{\pi_t: t \in V \}}^{\w^*}$} \Bigg\}.
\end{equation*}
\end{prop}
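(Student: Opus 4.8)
The plan is to prove the two inclusions separately, mirroring the classical argument of \cite[Lemma 2.4]{DLG2} but replacing norm/weak estimates by their weak$^*$ counterparts. Write $\mathcal{I}=\bigcap_{V \in \mathcal{V}(e)} \ovl{\{\pi_t: t \in V \}}^{w^*}$ for brevity. For the inclusion ``$\supseteq$'', suppose $x \in X$ is fixed by every $T \in \mathcal{I}$. To see that $t \mapsto \langle y,\pi_t x\rangle_{X_*,X}$ is continuous at the identity $e$ for each fixed $y \in X_*$, I would argue by contradiction: if it fails, there is a net $(t_i)$ converging to $e$ in $G$ with $|\langle y,\pi_{t_i}x\rangle - \langle y,x\rangle| \geq \epsi$ for some $\epsi>0$. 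Since $X$ is a dual Banach space and the operators $\pi_{t_i}$ are uniformly bounded (by the boundedness of $\pi$), the net $(\pi_{t_i})$ has an accumulation point $T$ in $\ovl{\{\pi_t: t\in V\}}^{w^*}$ for every $V \in \mathcal{V}(e)$ by Alaoglu's theorem, hence $T \in \mathcal{I}$; passing to the corresponding subnet, $\langle y,\pi_{t_i}x\rangle \to \langle y,Tx\rangle = \langle y,x\rangle$ since $T$ fixes $x$, contradicting the separation. Continuity at a general point $t_0 \in G$ then follows by writing $\pi_t x = \pi_{t_0}\pi_{t_0^{-1}t}x$ and using that $\pi_{t_0}$ is $w^*$-continuous, i.e.\ $\langle y,\pi_{t_0}(\cdot)\rangle = \langle \pi_{t_0 *}y,\cdot\rangle$ with $\pi_{t_0 *}y \in X_*$ by \eqref{homeo}, together with continuity at $e$ applied to the vector $\pi_{t_0}$ — more precisely applied with $\pi_{t_0^{-1}t} \to e$ and the functional $\pi_{t_0 *}y$. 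Hence $x \in X_{w^*}$.

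For the converse inclusion ``$\subseteq$'', suppose $x \in X_{w^*}$ and let $T \in \mathcal{I}$; I must show $T(x)=x$. Fix $y \in X_*$ and $\epsi>0$. By definition of $X_{w^*}$, choose $V \in \mathcal{V}(e)$ such that $|\langle y,\pi_t x\rangle - \langle y,x\rangle| \leq \epsi$ for all $t \in V$. Since $T \in \ovl{\{\pi_t : t \in V\}}^{w^*}$, there is a net $(t_i)$ in $V$ with $\pi_{t_i} \to T$ in the weak$^*$ topology of $B(X)$, which by the footnote description of that topology means $\langle y,\pi_{t_i}x\rangle \to \langle y,Tx\rangle$. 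Passing to the limit in $|\langle y,\pi_{t_i}x\rangle - \langle y,x\rangle| \leq \epsi$ gives $|\langle y,Tx\rangle - \langle y,x\rangle| \leq \epsi$, and since $\epsi>0$ and $y \in X_*$ were arbitrary we conclude $Tx = x$.

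The step I expect to require the most care is the continuity-at-a-general-point argument in the first inclusion: one has to be careful that the ``continuously translating'' condition at $e$ transfers to all of $G$, which uses crucially that each $\pi_{t_0}$ is itself $w^*$-continuous (so that precomposition with it keeps test functionals inside the predual $X_*$ via the homeomorphism \eqref{homeo}) — this is why the hypothesis ``$\pi \co G \to B_{w^*}(X)$'' rather than merely ``$\pi \co G \to B(X)$'' is morally needed, even though the statement as written only says $\pi\co G \to B(X)$; I would either invoke $w^*$-continuity of the individual $\pi_t$ as part of the standing hypotheses of this section, or else note that membership of $x$ in $X_{w^*}$ is detected already by continuity at $e$ alone together with the group structure. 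The rest is a routine transcription of \cite[Lemma 2.4]{DLG2} with ``weak'' replaced by ``weak$^*$'' throughout, using Alaoglu compactness in place of reflexivity.
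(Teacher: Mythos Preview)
Your proposal is correct and follows essentially the same route as the paper's proof: both inclusions are handled exactly as you describe, via the weak$^*$ Alaoglu compactness of $\{\pi_t\}$ and the observation that accumulation points of $(\pi_{t_i})$ for $t_i\to e$ land in $\mathcal{I}$. The only cosmetic difference is that for the inclusion ``$\supseteq$'' the paper argues directly (every accumulation point of $(\pi_{t_i}x)$ in the weak$^*$-compact set $\ovl{\{\pi_t:t\in G\}}^{w^*}\cdot x$ equals $x$, hence the net converges), whereas you phrase the same mechanism as a proof by contradiction; your remark about needing $\pi_{t_0}\in B_{w^*}(X)$ to pass from continuity at $e$ to continuity everywhere is a point the paper leaves implicit.
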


\begin{proof}
%
%
Consider $x \in X_{\w^*}$. If $y \in X_*$ then using the continuity of $t \mapsto \big\langle y,\pi_t(x)\big\rangle_{X_*,X}$ at the neutral element $e$, we see that for any $\epsi >0$ that there exists a neighbourhood $V_{\epsi,x,y}$ of the neutral element $e$ such that for any $t \in V_{\epsi,x,y}$
\begin{equation}
	\label{plus-petit-epsi}
\left|\big\langle y,\pi_t(x) \big\rangle_{X_*,X}-\langle y,x \rangle_{X_*,X}\right|
<\epsi.
\end{equation}
Let $T$ be an element of the closure $\overline{\{\pi_t \ :\ t \in V_{\epsi,x,y} \}}^{\w^*}$. There exists a net $\big(\pi_{t_i}\big)_{i \in I}$ with $t_i \in V_{\epsi,x,y}$ converging to $T$ in the weak* topology. For any $i \in I$, by \eqref{plus-petit-epsi}, we have 
$$
\left|\big\langle y,\pi_{t_i}(x) \big\rangle_{X_*,X}-\langle y,x \rangle_{X_*,X}\right|
<\epsi.
$$ 
Passing to the limit, we obtain
$$
\left|\big\langle y,T(x)-x \big\rangle_{X_*,X}\right|
=\left|\big\langle y,T(x) \big\rangle_{X_*,X}-\langle y,x \rangle_{X_*,X}\right|
<\epsi.
$$
Now, if $T_0 \in \bigcap_{V \in \mathcal{V}_G(e)} \ovl{\{\pi_t: t \in V \}}^{\w^*}$ then for any $\epsi>0$ and any $y \in X_*$ the element $T_0$ belongs to $\overline{\{\pi_t : t \in V_{\epsi,x,y} \}}^{\w^*}$. For any $y \in X_*$, we deduce that $|\langle y,T_0(x)-x \rangle_{X_*,X}| <\epsi$ for any $\epsi >0$ and thus
$$
\left|\big\langle y,T_0(x)-x \big\rangle_{X_*,X}\right|
=0.
$$
We conclude that $T_0(x)=x$. 

For the reverse inclusion, let $x \in X$ fixed by all elements of $\bigcap_{V \in \mathcal{V}_G(e)} \ovl{\{\pi_t: t \in V \}}^{\w^*} $, i.e. suppose that for any $T \in \bigcap_{V \in \mathcal{V}_G(e)} \ovl{\{\pi_t: t \in V \}}^{\w^*}$ we have $T(x)=x$. Consider a net $(t_{i})_{i \in I}$ in $G$ converging to the identity $e$. Since the representation $\pi$ is bounded, the subset $\ovl{\{\pi_t : t \in G\}}^{\w^*}$ is weak* compact by \cite[Corollary 2.6.19]{Meg1} (hence compact for the point weak* topology). Using the continuous map $\B(X) \to X$, $T \mapsto T(x)$, where the first space is equipped with the point weak* topology and the second with the weak* topology, we see that the subset $\ovl{\{\pi_t : t \in G\}}^{\w^*} \cdot x$ of $X$ is compact for the weak* topology.

Note that an accumulation point of the net $\big(\pi_{t_{i}}\big)_{i \in I}$ is an element of $\bigcap_{F \in \mathcal{F}} \ovl{F}^{\w^*}$ where 
$$
\mathcal{F}
=\big\{F \subset \B(X):\text{ there exists } i_0 \in I \text{ such that } \{\pi_{t_{i}} :  i \geq i_0\} \subset F \big\}.
$$ 
For any neighbourhood $V$ of the neutral element $e$ there exists $i_V$ such that $i \geq i_V$ implies $t_{i} \in V$ and thus $\pi_{t_{i}} \in \pi(V)$. Thus the set $\{\pi_{t_{i}} : i \geq i_V \}$ is included in $\{\pi_t : t \in V \}$. Then the set $\{\pi_t: t \in V \}$ belongs to $\mathcal{F}$. We deduce that
$$
\bigcap_{F \in \mathcal{F}} \ovl{F}^{\w^*} \subset \bigcap_{V \in \mathcal{V}_G(e)} \ovl{\{\pi_t: t \in V \}}^{\w^*}.
$$
We conclude that the net $\big(\pi_{t_{i}}\big)_{i \in I}$ can have accumulation points only in the intersection $\bigcap_{V \in \mathcal{V}_G(e)} \ovl{\{\pi_t: t \in V \}}^{\w^*}$. 

Now, it is not difficult to see that the net $\big(\pi_{t_{i}}(x)\big)_{i \in I}$ of $X$ can only have accumulation points in the weak* topology of in $\bigcap_{V \in \mathcal{V}_G(e)} \ovl{\{\pi_t: t \in V \}}^{\w^*}\cdot x =\{x\}$. For this, let $z \in X$ be an accumulation point of $\big(\pi_{t_{i}}(x)\big)_{i \in I}$ in the weak* topology of $X$. Given a neighbourhood $V$ of $e$, we find, for any weak* topology neighbourhood $W$ of $z$, some $i \in I$ such that
$$
t_i \in V \quad \text{and} \quad \pi_{t_{i}}(x) \in W.
$$
Hence we have
$$
\Big( \ovl{\{\pi_t : t \in V \}}^{\w^*} \cdot x\Big)\cap W \not= \emptyset.
$$
We infer that $z$ belongs to the weak* closure of $\ovl{\{\pi_t : t \in V \}}^{\w^*} \cdot x$. Since the subset $\ovl{\{\pi_t : t \in V \}}^{\w^*} \cdot x$ is closed as the continuous image of the compact set $ \ovl{\{\pi_t : t \in V \}}^{\w^*}$, we conclude that $z$ belongs to $\ovl{\{\pi_t : t \in V \}}^{\w^*} \cdot x$.

Lying in the weak* compact subset $\ovl{\{\pi_t : t \in G \}}^{\w^*} \cdot x$ of $X$, we infer that the net $\big(\pi_{t_{i}}(x)\big)_{i \in I}$ converges for the weak* topology to $x$. Consequently, the map $t \mapsto \pi_t(x)$ is weak* continuous at $t=e$, hence everywhere, completing the proof.
\end{proof}

%

%

The following result is a particular case of the combination of \cite[Theorem 1.2]{BGKS}, \cite[Proposition 5.5]{BGKS}, \cite[Remark 5.6]{BGKS} and \cite[Corollary 4.3]{BGKS} (and its proof). This is \cite[Theorem 2.4]{KuN} with some complements. 

\begin{thm}
\label{thm-Batkai}
Let $M$ be a von Neumann algebra equipped with a normal faithful state $\phi$. Let $\mathscr{S}$ be a semigroup of $\phi$-Markov maps. The closure $\ovl{\Conv \mathscr{S}}^{\w^*}$ of the convex hull $\Conv(\mathscr{S})$ of $\mathscr{S}$ in the weak* topology of $\B(M)$ is a compact semitopological semigroup and its kernel is a singleton $\{\E \}$ where $\E$ is a faithful normal conditional expectation $\E \co M \to M$ leaving $\phi$ invariant satisfying
$$
\ran \E 
= \big\{ x \in M : T(x)=x \ \text{ for any $T \in \mathscr{S}$ } \big\}.
$$
\end{thm}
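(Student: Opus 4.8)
Below is the plan I would follow; the statement can also be read off from \cite{BGKS} and \cite[Theorem 2.4]{KuN}, but I will indicate a direct route building on the three weak$^*$ Jacobs--de Leeuw--Glicksberg propositions just established, together with the Hilbert space structure carried by $\phi$.

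\emph{Compactness and the semigroup structure.} Each $T\in\mathscr{S}$, being unital and positive, satisfies $\norm{T}=1$ and is normal, hence lies in the unit ball of $B_{w^*}(M)$; so does every finite convex combination, so ${\rm co}\,\mathscr{S}$ lies in that ball, whose weak$^*$-closure is weak$^*$-compact by Alaoglu's theorem and the homeomorphism $(\ref{homeo})$. Composition is separately weak$^*$-continuous on $B_{w^*}(M)$ (for a fixed normal $S$, $T\mapsto ST$ is weak$^*$-continuous because $S_*$ is bounded, and $T\mapsto TS$ trivially so), and ${\rm co}\,\mathscr{S}$ is closed under composition because $\mathscr{S}$ is. Passing to the weak$^*$-closure via separate continuity shows that $\ovl{{\rm co}\,\mathscr{S}}^{w^*}$ is a compact semitopological semigroup; this step is routine and parallels the lemma preceding Proposition \ref{prop-stab}.

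\emph{The kernel contains a conditional expectation.} By \cite[Theorem 2.3]{DLG1} the kernel $\mathscr{K}$ of $\ovl{{\rm co}\,\mathscr{S}}^{w^*}$ is non-empty, and being the kernel of a compact semitopological semigroup it is completely simple; pick an idempotent $\mathbb{E}\in\mathscr{K}$. I would then note that unitality, invariance of $\phi$, and the Schwarz inequality $T(x)^*T(x)\leq T(x^*x)$ all survive convex combinations (operator convexity of $a\mapsto a^*a$) and weak$^*$-limits (for $\psi\in M_*^+$ the map $T\mapsto\psi(T(x^*x))$ is weak$^*$-continuous, while $T\mapsto\psi(T(x)^*T(x))$ is weak$^*$-lower semicontinuous), so $\mathbb{E}$ is a unital Schwarz map with $\phi\circ\mathbb{E}=\phi$ and $\mathbb{E}^2=\mathbb{E}$. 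For $x\in\ran\mathbb{E}$ the Schwarz inequality gives $x^*x\leq\mathbb{E}(x^*x)$, hence $\phi(\mathbb{E}(x^*x)-x^*x)=0$, and faithfulness of $\phi$ forces $\mathbb{E}(x^*x)=x^*x$; thus $\ran\mathbb{E}$ lies in the multiplicative domain of $\mathbb{E}$, and the standard multiplicative-domain argument makes $\ran\mathbb{E}$ a unital $*$-subalgebra and $\mathbb{E}$ a faithful, $\phi$-preserving conditional expectation onto it.

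\emph{The kernel is a singleton.} This is the step I expect to cost the most work. I would pass to the GNS representation of $(M,\phi)$ on a Hilbert space $H$: combining the Schwarz inequality with $\phi\circ T=\phi$ yields $\norm{\widehat{T(x)}}\leq\norm{\hat x}$, so every $T\in\ovl{{\rm co}\,\mathscr{S}}^{w^*}$ extends to a contraction $\widetilde T$ on $H$, and $T\mapsto\widetilde T$ is affine, multiplicative, injective ($M$ embeds densely in $H$ and $\phi$ is faithful) and weak$^*$-to-weak-operator continuous, hence a topological semigroup isomorphism of $\ovl{{\rm co}\,\mathscr{S}}^{w^*}$ onto a compact semitopological semigroup $\mathscr{T}$ of contractions on $H$; it then suffices to show the kernel of $\mathscr{T}$ is a point. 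Let $Q$ be the idempotent of that kernel corresponding to $\mathbb{E}$; as an idempotent contraction it is an orthogonal projection, and $Q\,\mathscr{T}\,Q$ is a compact group (topological, by Ellis' theorem) whose elements are unitaries of $\ran Q$, their group inverses being contractions. Integrating the identity map over its Haar measure gives a barycentre $b$ that lies in $\mathscr{T}$ (which is convex and weak-operator closed), satisfies $Qb=bQ=b$ and $b^2=b$, hence belongs to the kernel and equals $Q$; but $b$ is also the orthogonal projection onto the fixed vectors of $Q\,\mathscr{T}\,Q$ inside $\ran Q$, so that group is trivial. Consequently the kernel of $\mathscr{T}$ is a rectangular band of orthogonal projections with $P_1P_2P_1=P_1$ for all its members, which forces $\ran P_1\subset\ran P_2$ and, symmetrically, equality of all members. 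Transporting back, $\mathscr{K}=\{\mathbb{E}\}$.

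\emph{Identification of the range and normality.} Since $\mathscr{K}=\{\mathbb{E}\}$ is an ideal, $\mathbb{E}$ is a zero element of $\ovl{{\rm co}\,\mathscr{S}}^{w^*}$; in particular $T\mathbb{E}=\mathbb{E}$ for every $T\in\mathscr{S}$, so $\ran\mathbb{E}\subset\{x\in M:T(x)=x\ \text{for any }T\in\mathscr{S}\}$, while conversely any common fixed point is fixed by ${\rm co}\,\mathscr{S}$ and hence by its weak$^*$-closure, in particular by $\mathbb{E}$. Therefore $\ran\mathbb{E}$ is exactly the common fixed-point space, which is the intersection of the weak$^*$-closed subspaces $\ker(\Id-T)$ (each $T$ normal), hence weak$^*$-closed; being also a $*$-subalgebra it is a von Neumann subalgebra $N$ of $M$, and a $\phi$-preserving faithful conditional expectation of $M$ onto a von Neumann subalgebra is automatically normal. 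This yields all the assertions; the principal obstacle throughout is the third step, namely the passage to $H$ together with the Haar-barycentre/rectangular-band argument collapsing the kernel.
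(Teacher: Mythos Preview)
The paper does not prove this theorem at all: it is stated as a black box, recorded as ``a particular case of the combination of \cite[Theorem 1.2]{BGKS}, \cite[Proposition 5.5]{BGKS}, \cite[Remark 5.6]{BGKS} and \cite[Corollary 4.3]{BGKS} (and its proof), see also \cite[Theorem 2.4]{KuN}''. Your proposal therefore supplies strictly more than the paper does.

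Your argument is correct. The compactness/semigroup step is routine; the survival of the Schwarz inequality under convex combinations (via operator convexity of $a\mapsto a^*a$) and weak$^*$-limits (via weak lower semicontinuity of $a\mapsto \psi(a^*a)$) is the right way to see that every element of $\ovl{{\rm co}\,\mathscr{S}}^{w^*}$ is a $\phi$-preserving Schwarz map, and the multiplicative-domain argument then makes any idempotent in the kernel a faithful $\phi$-preserving conditional expectation. Your GNS step is also sound: the map $T\mapsto\widetilde T$ is indeed an affine, multiplicative, injective, weak$^*$-to-WOT continuous semigroup isomorphism onto a compact convex semitopological semigroup of Hilbert-space contractions; for an idempotent $Q$ in the kernel one has $Q\mathscr{T}Q=QKQ$, which is a compact group of unitaries of $\ran Q$ by Ellis' theorem, and the Haar barycentre $b$ lies in the WOT-closed convex set $\mathscr{T}$, is idempotent, satisfies $b=QbQ\in QKQ$, hence $b=Q$, forcing the group to be trivial. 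The rectangular-band collapse ($P_1P_2P_1=P_1$ for orthogonal projections $\Rightarrow\ran P_1\subset\ran P_2$) then reduces the kernel to a point. The identification of $\ran\mathbb{E}$ with the common fixed points and the normality of $\mathbb{E}$ (as the unique $\phi$-preserving conditional expectation onto a von Neumann subalgebra) are standard.

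In substance your route is exactly the K\"ummerer--Nagel mean-ergodic argument for Schwarz semigroups on $W^*$-algebras that the paper cites, so you have reconstructed \cite{KuN}/\cite{BGKS} rather than found a different proof; the gain is self-containment, the cost is that the third step is somewhat longer than a one-line citation.
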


\begin{proof}
By Lemma \ref{Lemma-prop-stabilite}, $\Conv \mathscr{S}$ is a bounded semigroup consisting of $\phi$-Markov maps. In particular, each map $T \co M \to M$ of $\Conv \mathscr{S}$ is a unital completely positive map, hence a Schwarz map by \cite[Proposition 3.3]{Pau}, i.e.:
$$
T(x)^*T(x) 
\leq T(x^*x), \qquad x \in M.
$$ 
By composition with the state $\varphi$, we deduce that for any $x \in M$
$$
\varphi\big(T(x)^*T(x) \big)
\leq \varphi\big(T(x^*x)\big)
=\varphi(x^*x),
$$
that is the assumption \cite[Theorem 3.2 (2)]{BGKS}. Consequently, by applying \cite[Theorem 3.2]{BGKS} with $\Conv \mathscr{S}$ instead of $\mathscr{S}$, we deduce that the closure $\ovl{\Conv \mathscr{S}}^{\w^*}$ of the convex hull $\Conv(\mathscr{S})$ of $\mathscr{S}$ in the point weak* topology of $\B(M)$ is a compact semitopological semigroup and that its kernel is a compact topological group for the point weak* topology whose the unit $P$ is the unique minimal projection of $\ovl{\Conv \mathscr{S}}^{\w^*}$. Note that the proof of \cite[Theorem 3.2]{BGKS} shows that each element of $\ovl{\Conv \mathscr{S}}^{\w^*}$ is weak* continuous and it is clear by a limit argument that an element of $\ovl{\Conv \mathscr{S}}^{\w^*}$ preserves the state. Hence $P \co M \to M$ is weak* continuous and preserves the state. Now the proof of \cite[Theorem 4.3]{BGKS} and its proof say that the kernel is equal to the singleton $\{P\}$ and that
$$
\ran P 
= \big\{ x \in M : T(x)=x \ \text{ for any $T \in \mathscr{S}$ } \big\}.
$$ 
By \cite[Remark 5.6]{BGKS}, the projection $P$ is faithful in the sense of \cite[page 19]{BGKS}. Then \cite[Proposition 5.5]{BGKS} says\footnote{\thefootnote. Note that \cite[Theorem 3.2]{BGKS} is a particular case of \cite[Theorem 1.2]{BGKS}.} that the weak* continuous projection $P$ leaving $\phi$ invariant is a (normal) faithful conditional expectation $\E \co M \to M$.
\end{proof}

The following lemma is a generalization of \cite[Lemma 3]{Fen1}. Thanks to the uniformly convexity of noncommutative $\L^p$-spaces \cite[Corollary 5.2]{PiX}, this lemma can be applied to noncommutative $\L^p$-spaces.
\begin{lemma}
\label{lemma-continuité-uniform-convexity}
Let $X$ be a Banach space and let $Y$ be a locally uniformly convex Banach space. Let $(T_t)_{t \geq 0}$ be a strongly continuous semigroup of contractions on $X$. Let $(U_t)_{t \in \Q}$ be a (non continuous) group of isometries on $X$ and $J \co X \to Y$ and $P \co Y\to X$ two contractions  such that $T_t=PU_tJ$ for any $t \in \Q^+$. If $x \in X$ then the map
\begin{equation*}
\begin{array}{cccc}
           &   \Q  &  \longrightarrow   &  Y  \\
           &   t   &  \longmapsto       &  U_t J(x)  \\
\end{array}
\end{equation*}
is continuous from $\Q$ to $Y$ with its norm topology.
\end{lemma}

\begin{proof}
Let $x \in X$ with $\norm{x}=1$. Since $\Id_{X}=T_0=PJ$, we have $\norm{J(x)}_{Y}=\norm{x}_{X}=1$. By the locally uniform convexity of $Y$, if $\epsi>0$ there exists $\delta(\epsi,x)>0$ such that if $y \in Y$ satisfies $\norm{y}_{Y} = 1$ and $\frac{\norm{y+J(x)}_Y}{2}_{} \geq 1-\delta(\epsi,x)$ we have 
\begin{equation}
\label{ine-50}
\norm{y-J(x)}_{Y} 
\leq \epsi.	
\end{equation}
Since $(U_t)_{t \in \Q}$ is a group of isometries, it suffices to show, for $x \in X$, the continuity from the right of the map $s \mapsto U_tJ(x)$ at $t=0$. Given $\epsi >0$, by the strong continuity of $(T_t)_{t \in \Q}$ there exists $\eta >0$ such that $0 \leq t \leq \eta$ implies $\norm{T_t(x)-x}_{X} \leq \delta(\epsi,x)$. Hence, for any $t \in \Q \cap [0, \eta]$ we have
\begin{align*}
  \bnorm{U_tJ(x)+J(x)}_{Y} &\geq \norm{PU_tJ(x)+PJ(x)}_{X}  
		= \norm{T_t(x)+x}_{X}
		= \bnorm{2x-(x-T_t(x))}_{X} \\
		&\geq  \norm{2x}_{X}- \norm{T_t(x)-x}_{X}
		\geq 2-\delta(\epsi,x)
		\geq 2-2\delta(\epsi,x).
\end{align*}
Hence $\frac{\norm{U_tJ(x)+J(x)}_{Y}}{2} \geq 1-\delta(\epsi,x)$. Since $\norm{U_tJ(x)}_{Y}=\norm{J(x)}_{Y}=\norm{x}_{X}=1$, by \eqref{ine-50}, we infer that $\norm{U_tJ(x)-J(x)}_{Y} \leq \epsi$.
\end{proof}

The following lemma is a variant of the above lemma and is a key lemma. The proof uses mysteriously some noncommutative $\L^p$-spaces for $1< p< \infty$.

\begin{lemma}
\label{prop-weak-continuity-not-destroyed}
Let $M,$ and $N$ be von Neumann algebras equipped with normal faithful states $\phi$ and $\psi$. Let $(T_t)_{t\geq 0}$ be a weak* continuous semigroup of $\phi$-Markov maps on $M$. Let $(U_t)_{t \in \Q}$ be a group of $*$-automorphisms of $N$ leaving $\psi$ invariant and $J \co M \to N$ a $(\phi,\psi)$-Markov $*$-monomorphism such that $T_t=\E U_tJ$ for any $t \in \Q^+$ where $\E \co N \to M$ is the canonical faithful normal conditional expectation preserving the states associated with $J$. For any $x \in M$ and any $y \in L^1(N)$, the map
\begin{equation*}
\begin{array}{cccc}
          &  \mathbb{Q} &  \longrightarrow   & \C   \\
          &  t &  \longmapsto       &  \big\langle y,U_tJ(x)\big\rangle_{\L^1(N),N} \\
\end{array}
\end{equation*}
is continuous. 
\end{lemma}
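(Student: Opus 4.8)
The plan is to push the whole situation down to the Hilbert space level, invoke Lemma~\ref{lemma-continuit�-uniform-convexity} there, and then lift the resulting $L^2$-continuity back to $w^*$-continuity in $N$. Since $\phi$ and $\psi$ are normal faithful states, $M$ and $N$ embed canonically and densely into $L^2(M)$ and $L^2(N)$ via $x \mapsto x\xi_\phi$ and $z \mapsto z\xi_\psi$, where $\xi_\phi = d_\phi^{1/2}$ and $\xi_\psi = d_\psi^{1/2}$ are the cyclic and separating vectors implementing the states, and $N$ acts on $L^2(N)$ by left multiplication. Each of the maps $T_t,U_t,J,\mathbb{E}$ is a Markov map, hence extends to a contraction between the associated $L^2$-spaces (see \cite[Lemma 2.4]{AnD}); I will write $\hat T_t, \hat U_t, \hat J, \hat{\mathbb{E}}$ for these extensions. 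Here $\hat U_t$ is a surjective isometry of $L^2(N)$ (its inverse $U_{-t}$ also preserves $\psi$), $\hat J$ is an isometry, and all of them are compatible with the embeddings ($\hat T_t(x\xi_\phi) = T_t(x)\xi_\phi$, $\hat U_t(z\xi_\psi) = U_t(z)\xi_\psi$, $\hat J(x\xi_\phi) = J(x)\xi_\psi$, $\hat{\mathbb{E}}(z\xi_\psi) = \mathbb{E}(z)\xi_\phi$), so from $T_t = \mathbb{E}U_tJ$ on $M$ and density of $M\xi_\phi$ in $L^2(M)$ one gets $\hat T_t = \hat{\mathbb{E}}\,\hat U_t\,\hat J$ on $L^2(M)$ for every $t \in \Q^+$.

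The point on which the argument turns is that $(\hat T_t)_{t \geq 0}$ is a \emph{strongly continuous} semigroup of contractions on $L^2(M)$. It is clearly a semigroup with $\hat T_0 = \Id$; for strong continuity it suffices, by uniform boundedness and the semigroup law, to treat the generating set $M\xi_\phi$ near $t = 0$. Fix $x \in M$. Because $T_t(x) \to x$ in the weak* topology of $M$ and $\phi(y^*\,\cdot) \in M_*$, one has $\langle y\xi_\phi, \hat T_t(x\xi_\phi)\rangle = \phi(y^*T_t(x)) \to \phi(y^*x)$ for all $y \in M$, i.e.\ $\hat T_t(x\xi_\phi) \rightharpoonup x\xi_\phi$ weakly; on the other hand, since $T_t$ is unital completely positive, hence a Schwarz map (\cite[Proposition 3.3]{Pau}), and $\phi \circ T_t = \phi$, we get $\|\hat T_t(x\xi_\phi)\|^2 = \phi\big(T_t(x)^*T_t(x)\big) \leq \phi(T_t(x^*x)) = \phi(x^*x) = \|x\xi_\phi\|^2$, while $\phi(x^*T_t(x)) \to \phi(x^*x)$ forces $\liminf_{t\to 0^+}\|\hat T_t(x\xi_\phi)\| \geq \|x\xi_\phi\|$; hence $\|\hat T_t(x\xi_\phi)\| \to \|x\xi_\phi\|$, and weak convergence together with convergence of the norms gives $\hat T_t(x\xi_\phi) \to x\xi_\phi$ in $L^2(M)$. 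Now Lemma~\ref{lemma-continuit�-uniform-convexity} applies with $X = L^2(M)$, $Y = L^2(N)$ (uniformly convex, hence locally uniformly convex, by \cite[Corollary 5.2]{PiX}), the semigroup $(\hat T_t)_{t\geq 0}$, the isometry group $(\hat U_t)_{t \in \Q}$, and the contractions $\hat J, \hat{\mathbb{E}}$, using $\hat T_t = \hat{\mathbb{E}}\hat U_t\hat J$ on $\Q^+$: for every $\xi \in L^2(M)$ the map $t \mapsto \hat U_t\hat J(\xi)$ is norm-continuous from $\Q$ to $L^2(N)$. Specializing to $\xi = x\xi_\phi$ and setting $z_t := U_tJ(x)$, so that $\hat U_t\hat J(x\xi_\phi) = z_t\xi_\psi$, this says $\|(z_t - z_s)\xi_\psi\|_{L^2(N)} \to 0$ as $\Q \ni t \to s$, for each $s \in \Q$.

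It remains to upgrade this into continuity of $t \mapsto \langle y, z_t\rangle_{L^1(N),N}$. Fix $s \in \Q$ and put $w_t := z_t - z_s$; since $U_t$ and $J$ are isometric, $\|z_t\| = \|x\|$, so $\sup_t\|w_t\| \leq 2\|x\| < \infty$. Working in the standard form of $N$ on $L^2(N)$ with cyclic and separating vector $\xi_\psi$: for $a' \in N'$ we have $w_t(a'\xi_\psi) = a'(w_t\xi_\psi)$, so $\|w_t(a'\xi_\psi)\| \leq \|a'\|\,\|w_t\xi_\psi\| \to 0$; since $N'\xi_\psi$ is dense in $L^2(N)$ and $(w_t)$ is bounded, $w_t \to 0$ in the strong operator topology, hence --- the net being bounded --- in the $\sigma$-weak topology of $N$. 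Therefore $\langle y, w_t\rangle_{L^1(N),N} \to 0$ for every $y \in L^1(N) = N_*$, which is the desired continuity at $s$; as $s$ was arbitrary, the map is continuous on $\Q$. The only genuinely delicate point is the strong continuity of $(\hat T_t)$ on $L^2(M)$, deduced above from the mere $w^*$-continuity of $(T_t)$ by the Hilbert-space fact that weak convergence plus convergence of norms implies norm convergence; this is what makes Lemma~\ref{lemma-continuit�-uniform-convexity} applicable, and everything else is bookkeeping.
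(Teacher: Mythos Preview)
Your proof is correct and follows the same overall strategy as the paper: pass to an $L^p$-level, invoke Lemma~\ref{lemma-continuit�-uniform-convexity}, and then lift the resulting norm continuity of $t \mapsto U_tJ(x)$ back to $w^*$-continuity on $N$. The paper works with an arbitrary $1<p<\infty$ and simply asserts (as a known fact) that the induced semigroup on $L^p(M)$ is strongly continuous; you specialize to $p=2$ and supply a self-contained argument for this step via the Hilbert-space fact that weak convergence together with convergence of norms gives norm convergence. For the final transfer, the paper uses the density of $D^{1/2p}L^{p^*}(N)D^{1/2p}$ in $L^1(N)$ together with an $\epsi/3$ argument, whereas you exploit the standard form of $N$ to pass from $\|w_t\xi_\psi\|_2 \to 0$ to strong, hence $\sigma$-weak, convergence $w_t \to 0$. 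Both routes are clean; your $p=2$ specialization is slightly more elementary and self-contained, while the paper's $L^p$ argument reuses machinery needed anyway in Section~\ref{sec:dilations-Lp}.
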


\begin{proof}
%
We fix $1< p< \infty$. The semigroup $(T_t)_{t \geq 0}$ induces a strongly continuous semigroup $(T_{t,p})_{t \geq 0}$ of contractions on $\L^p(M)$ and the semigroup $(U_t)_{t \in \Q}$ induces a group of isometries $(U_{t,p})_{t \in \Q}$ on $\L^p(N)$. Moreover $J$ induces an isometric embedding $J_p$ of $\L^p(M)$ into $\L^p(N)$ and $\E$ a contractive map $\E_p$ from $\L^p(N)$ onto $\L^p(M)$. For any $x \in \L^p(M)$, by Lemma \ref{lemma-continuité-uniform-convexity}, the map $t \mapsto U_{t,p} J_p(x)$ is continuous from $\Q$ into $\L^p(N)$ with its norm topology. Let $t_0 \in \Q$ and let $D_\psi \in \L^1(N)$ be the density operator of $\psi$. Note that $D_\psi^{\frac{1}{2p}}$ belongs to $\L^{2p}(N)$. For any $z \in \L^{p^*}(N)$ and any $x \in N$, we have
\begin{align*}
\MoveEqLeft
  \big\langle D_\psi^{\frac{1}{2p}}zD_\psi^{\frac{1}{2p}},U_tJ(x)\big\rangle_{\L^1(N),N}
		=\big\langle z,D_\psi^{\frac{1}{2p}}U_tJ(x)D_\psi^{\frac{1}{2p}}\big\rangle_{\L^{p^*}(N),\L^p(N)}\\
		&=\big\langle z,U_{t,p}J_p(x)\big\rangle_{\L^{p^*}(N),\L^p(N)}
		\xra[t \to t_0]{} \big\langle z,U_{t_0,p}J_p(x)\big\rangle_{\L^{p^*}(N),\L^p(N)}\\
		&=\big\langle z,D^{\frac{1}{2p}}_\psi U_{t_0}J(x)D^{\frac{1}{2p}}_\psi\big\rangle_{\L^{p^*}(N),\L^p(N)}
		=\big\langle D^{\frac{1}{2p}}_\psi zD^{\frac{1}{2p}}_\psi,U_{t_0}J(x)\big\rangle_{\L^{1}(N),N}.
\end{align*}
Recall that $D_\psi^{\frac{1}{2p}}\L^{p^*}(N)D_\psi^{\frac{1}{2p}}$ is norm dense in the Banach space $\L^1(N)$. Now, with a $\frac{\epsi}{3}$-argument, it is not difficult to complete this proof. 
\end{proof}

Now we can prove our first main result. We use a similar strategy to the one of Fendler \cite{Fen1}.  However, the method of \cite{Fen1} does not apply identically to our context. We will use some results from the papers \cite{DLG1} and \cite{DLG2}.

\begin{thm}
\label{thm-dilation-semigroup-weak-star2}
let $M$ be a von Neumann algebra equipped with a normal faithful state $\phi$. Let $(T_t)_{t \geq 0}$ be a weak* continuous semigroup of factorizable $\phi$-Markov map on $M$. Then the semigroup $(T_t)_{t \geq 0}$ is dilatable. 
\end{thm}

\begin{proof}
For a finite set $B \subset \Q$ let $U_B=\{ n \in \mathbb{N} : nt \in \mathbb{Z} \text{ for any } t \in B \}$\footnote{\thefootnote. Roughly speaking, the set $U_B$ consists of the common multiples of the denominators of the rationals of $B$.  }. Then the set of all sets $ \{ U_B : B \subset \mathbb{Q},\ B \text{ finite} \}$ is closed under finite intersections\footnote{\thefootnote. Note that $I_B=\{ n \in \mathbb{Z} : nt \in \mathbb{Z} \text{ for any } t \in B \}$ is an ideal of $\Z$. Hence we can write $I_B=n_B\mathbb{Z}$. We deduce that $U_B=n_B \mathbb{N}$. Finally $U_B \cap U_{B'}=U_{\big\{\frac{1}{\mathrm{LCM}(n_B,n_{B'})}\big\}}$.} and thus constitutes the basis of some filter $\mathcal{F}$ which is contained in some ultrafilter $\ul$.

Using \cite[Theorem 4.4]{HaM}, for any integer $n \geq 0$, we note that the operator $T_{\frac{1}{n}}\co M \to M$ is dilatable. This means that there exist a von Neumann algebra $N_{\frac{1}{n}}$ equipped a normal faithful state $\varphi_{\frac{1}{n}}$, a $*$-automorphism $S_{\frac{1}{n}}$ of $N_{\frac{1}{n}}$ leaving $\varphi_{\frac{1}{n}}$ invariant and a $(\phi,\varphi_{\frac{1}{n}})$-Markov $*$-monomorphism $J_{\frac{1}{n}} \co M \to N_{\frac{1}{n}}$ such that
\begin{displaymath}
\big(T_{\frac{1}{n}}\big)^k
=\E_{\frac{1}{n}} \big(S_{\frac{1}{n}}\big)^{k} J_{\frac{1}{n}}, \qquad  k \geq 0,
\end{displaymath}
where $\E_{\frac{1}{n}}=(J_{\frac{1}{n}})^* \co N_{\frac{1}{n}} \to M$ is the canonical $\varphi_{\frac{1}{n}}$-preserving normal faithful conditional expectation associated with $J_{\frac{1}{n}}$. For $t \in \mathbb{Q}$, we define the operator $S_{\frac{1}{n},t} \co N_{\frac{1}{n}} \to N_{\frac{1}{n}}$ by 
\begin{displaymath} 
S_{\frac{1}{n},t} 
=\left\{ \begin{array}{r@{\quad \quad}l}
 (S_{\frac{1}{n}})^{nt}  &  \text{if} \quad nt \in \mathbb{Z} \\ 
\Id_{ N_{\frac{1}{n}}} & \mbox{if} \quad nt \notin \mathbb{Z}
\end{array}. \right.  
\end{displaymath}
If $B = \{t_1,\dots,t_k \} \subset \mathbb{Q^+} $ is a finite subset, then for $t \in B$ and $n \in U_B$ we have $nt \in \Z$ and thus 
\begin{equation}
\label{commute}
	T_t=\big(T_{\frac{1}{n}}\big)^{nt}
=\E_{\frac{1}{n}}  \big(S_{\frac{1}{n}}\big)^{nt} J_{\frac{1}{n}}
=\E_{\frac{1}{n}} S_{\frac{1}{n},t}  J_{\frac{1}{n}},
\end{equation}
i.e. the following diagram commutes.
$$
 \xymatrix @R=1cm @C=2cm{
    N_{\frac{1}{n}}      \ar[r]^{S_{\frac{1}{n},t}}  &N_{\frac{1}{n}} \ar[d]^{\mathbb{E}_{\frac{1}{n}}} \\
    M    \ar[r]_{T_{t}}\ar[u]^{J_{\frac{1}{n}}}&  M
}
$$

We consider the following ultraproducts of von Neumann algebras
$$
M^\ul
=(M,\phi)^\ul 
\quad \text{ and } \quad 
\widetilde{N}
=\big(N_{\frac{1}{n}},\varphi_{\frac{1}{n}}\big)^\ul.
$$
We equip $\widetilde{N}$ with the normal faithful state $\varphi=\big(\varphi_{\frac{1}{n}}\big)^\ul$. Using Proposition \ref{Prop-injection}, we can consider the canonical inclusion $\mathcal{I} \co M \to M^\ul$ $x \mapsto (x,x,\ldots)^\ul$ which is a $(\phi,\phi^\ul)$-Markov $*$-monomorphism and the associated normal faithful conditional expectation $\E \co M^\ul \to M$, $(x_n)^\ul \mapsto \w^*\text{-}\lim_{n \to \ul} x_n$. Using Proposition \ref{Prop-ultramap}, we can introduce the operators
\begin{equation}
	\label{operators-1}
\widetilde{J}
= \big(J_{\frac{1}{n}}\big)^\ul \mathcal{I},\qquad 
\widetilde{S}_{t} = \big(S_{\frac{1}{n},t}\big)^\ul, 
\quad t \in \mathbb{Q}.	
\end{equation}
By composition the map $\widetilde{J} \co M \to \widetilde{N}$ is a $(\phi,\varphi)$-Markov $*$-monomorphism. For any $t \in \Q$, note also that the map $\widetilde{S_t} \co \widetilde{N} \to \widetilde{N}$ is a $*$-automorphism of $\widetilde{N}$ leaving $\varphi$ invariant. Let $\widetilde{\E}=\widetilde{J}^* \co \widetilde{N} \to M$ be the canonical faithful normal conditional expectation associated with $\widetilde{J}$. 
We have 
\begin{equation}
	\label{operator-2}
\widetilde{\E}
=\big(\widetilde{J}\big)^*
=\Big(\big(J_{\frac{1}{n}}\big)^\ul \mathcal{I}\Big)^*
=\mathcal{I}^*\Big(\big(J_{\frac{1}{n}}\big)^\ul\Big)^*
=\E \big(\E_{\frac{1}{n}}\big)^\ul.	
\end{equation}
Let us check that the map
\begin{equation*}
\begin{array}{cccc}
   \widetilde{S}   \co &  \mathbb{Q}   &  \longrightarrow   &  \B\big(\widetilde{N}\big)  \\
           &  t   &  \longmapsto       & \widetilde{S}_t   \\
\end{array}
\end{equation*}
is a representation and that it defines a dilation of the semigroup $(T_t)_{t \in \mathbb{Q}^+}$. Suppose $t, t' \in \mathbb{Q}$ and $x=(x_n)^\ul \in \widetilde{N}$. If $n \in U_{\{t,t'\}}$ (i.e. for $n$ sufficiently large) then we have $nt,nt' \in \Z$ and $n(t+t')=nt+nt' \in \Z$. Then we obtain
$$
S_{\frac{1}{n},t+t'}(x_n)   
=\big(S_{\frac{1}{n}}\big)^{n(t+t')}(x_n)
=\big(S_{\frac{1}{n}}\big)^{nt+nt'}(x_n)
=S_{\frac{1}{n}}^{nt}\big(S_{\frac{1}{n}}^{nt'}(x_n)\big)
=S_{\frac{1}{n},t}\big(S_{\frac{1}{n},t'}(x_n)\big).
$$
We have $\big(S_{n,t+t'}(x_n)\big)^\ul=\big(S_{n,t}\big(S_{n,t'}(x_n)\big)\big)^\ul$
and thus 
$$
\widetilde{S}_{t+t'}\big((x_n)^\ul\big)
=\widetilde{S}_{t}\widetilde{S}_{t'}\big((x_n)^\ul\big). 
$$
Moreover, for any $t \in \mathbb{Q}^+$ and any $x \in M$, using \eqref{operators-1} and \eqref{operator-2} in the first equality, we have
\begin{align*}
\MoveEqLeft
  \widetilde{\E}\widetilde{S_t}\widetilde{J}(x)
		=\E \big(\E_{\frac{1}{n}}\big)^\ul\big(S_{\frac{1}{n},t}\big)^\ul\big(J_{\frac{1}{n}}\big)^\ul \mathcal{I}(x)
		=\E \big(\E_{\frac{1}{n}}\big)^\ul\big(S_{\frac{1}{n},t}\big)^\ul\big(J_{\frac{1}{n}}\big)^\ul \mathcal{I}(x)
		=\w^*\text{-}\lim_{n \to \ul} \E_{\frac{1}{n}} S_{\frac{1}{n},t}J_{\frac{1}{n}}(x).
\end{align*}
By (\ref{commute}), if $n \in U_{\{t\}}$, we have $\E_{\frac{1}{n}} S_{\frac{1}{n},t}J_{\frac{1}{n}}(x)=T_t(x)$. We deduce that  
\begin{equation}
	\label{Grosse-dilation}
\widetilde{\E}\widetilde{S_t}\widetilde{J}
=T_t, \qquad t \in \mathbb{Q}^+.	
\end{equation}
Recall that the weak* topology on $\B(\widetilde{N})$ is finer than the point weak* topology. Thus, using Lemma \ref{Lemma-prop-stabilite}, we see that each element of $\bigcap_{V \in \mathcal{V}_{\mathbb{Q}}(0)} \ovl{\{\widetilde{S}_t: t \in V \}}^{\w^*}$ is a $\varphi$-Markov map, in particular weak* continuous. 

Using the bounded representation $\widetilde{S} \co \mathbb{Q} \to \B(\widetilde{N})$, $t \mapsto \widetilde{S}_t$, we can use the notation $\mathcal{S}^{\w^*}(\widetilde{S})$ defined in \eqref{Def-S-weak}.

\begin{lemma}
\label{Lemma-two-semigroups}
The sets $\bigcap_{V \in \mathcal{V}_{\mathbb{Q}}(0)} \ovl{\{\widetilde{S}_t: t \in V \}}^{\w^*}$ and $\mathcal{S}^{\w^*}(\widetilde{S})$ are semigroups for the product of operators.
\end{lemma}

\begin{proof} 
Let $T$ and $R$ be elements of $\bigcap_{V \in \mathcal{V}_{\mathbb{Q}}(0)} \ovl{\{\widetilde{S}_t: t \in V \}}^{\w^*}$. Let $U$ be a neighbourhood of $R$ for the weak* topology. For any neighbourhood $V$ of $e$, we have $U \cap \{\widetilde{S}_t: t \in V \} \not= \emptyset$. Hence there exists $t_{V,U} \in V$ such that $\widetilde{S}_{t_{V,U}} \in U$. The net\footnote{\thefootnote. Declare that $(V_1,U_1) \preceq (V_2,U_2)$ if $V_2 \subset V_1$ and $U_2 \subset U_1$.} $(t_{V,U})$ converges\footnote{\thefootnote. Let $V_0$ be a neighbourhood of $e$. Choose a neighbourhood $U_0$ of $R$ for the weak* topology. Then for any $(V,U)$ such that $(V,U) \succeq (V_0,U_0)$ we have $t_{V,U} \in V \subset V_0$.} to $e$ in $G$ and the net $(\widetilde{S}_{t_{V,U}})_{}$ converges\footnote{\thefootnote.  Let $U_0$ be a neighbourhood of $R$ for the weak* topology. Choose a neighbourhood $V_0$ of $e$. Then for any $(V,U)$ such that $(V,U) \succeq (V_0,U_0)$ we have $\widetilde{S}_{t_{V,U}} \in U \subset U_0$.} to $R$ in the weak* topology. 

Let $V$ be a neighbourhood of the neutral element $e$ of $G$. Using \cite[Theorem 4.5]{HeR1}, choose a neighbourhood $W$ of $e$ such that $W^2 \subset V$. Note that $T \in \ovl{\{\widetilde{S}_t: t \in W \}}^{\w^*}$. We have
$$
\{\widetilde{S}_t: t \in W \} \cdot \{\widetilde{S}_t: t \in W \} 
\subset 
\{\widetilde{S}_t : t \in V \}
\subset \ovl{\{\widetilde{S}_t: t \in V \}}^{\w^*}.
$$
If $V' \subset W$, i.e. if $(W,U) \preceq (V',U)$, we have $t_{V',U} \in V' \subset W$. Recalling that the multiplication of operators is separately continuous in the point weak* topology on the subspace of weak* continuous operators, we obtain\footnote{\thefootnote. Here, it is crucial that each operator of $\ovl{\{\widetilde{S}_t: t \in W \}}^{\w^*}$ is weak* continuous.}
$$
T \cdot\widetilde{S}_{t_{V',U}} \in \ovl{\{\widetilde{S}_t: t \in W \}}^{\w^*} \cdot \{\widetilde{S}_t: t \in W \}\subset \ovl{\{\widetilde{S}_t: t \in V \}}^{\w^*}.
$$ 
Passing to the limit, we deduce that $T R \in \ovl{\{\widetilde{S}_t: t \in V \}}^{\w^*}$ for any neighbourhood $V$ of $e$. Hence $TR$ belongs to the set $\bigcap_{V \in \mathcal{V}_\mathbb{Q}(0)} \ovl{\{\widetilde{S}_t: t \in V \}}^{\w^*}$, i.e. this latter set is a semigroup. Consequently, the convex hull of $\bigcap_{V \in \mathcal{V}_{\mathbb{Q}}(0)} \ovl{\{\widetilde{S}_t: t \in V \}}^{\w^*}$ is\footnote{\thefootnote. Note that $((1-t)x+ty)((1-t')x'+t'y')=tt'xx''+t(1-t')xy'+(1-t)t'yx'+(1-t)(1-t')yy'$ and that $tt'+t(1-t')+(1-t)t'+(1-t)(1-t')=1$.} also a semigroup. Separate continuity of multiplication shows by a two-step argument that the same is true of the point weak* closure $\mathcal{S}^{\w^*}(\pi)$. 
\end{proof}

From Theorem \ref{thm-Batkai} with $\mathscr{S}
=\bigcap_{V \in \mathcal{V}_{\mathbb{Q}}(0)} \ovl{\{\widetilde{S}_t: t \in V \}}^{\w^*}$, we deduce that the kernel of the weak* closure $\mathcal{S}^{\w^*}(\widetilde{S})$ of the convex hull of $\bigcap_{V \in \mathcal{V}_{\mathbb{Q}}(0)} \ovl{\{\widetilde{S}_t: t \in V \}}^{\w^*}$ is a singleton $\{\E_{\w^*}\}$ where $\E_{\w^*} \co \widetilde{N} \to \widetilde{N}$ is a faithful normal conditional expectation preserving $\varphi$ satisfying
$$
\ran \E_{\w^*}
=\Big\{ x \in \widetilde{N} : T(x)=x \ \text{ for any $T \in \mathscr{S}$ } \Big\}.
$$
By Proposition \ref{prop-fixed-points=Xwstar}, the subspace $\widetilde{N}_{\w^*}$ of weak* continuously translated elements of $\widetilde{N}$ of the representation $\Q \to \B\big(\widetilde{N}\big)$, $t \mapsto \widetilde{S}_t$ is equal to the fixed point subspace of $\mathscr{S}$:
$$
\widetilde{N}_{\w^*}
=\Big\{ x \in \widetilde{N} : T(x)=x \ \text{ for any $T \in \mathscr{S}$ } \Big\}.
$$ 
Hence the von Neumann algebra $\ran \E_{\w^*}$ is equal to $\widetilde{N}_{\w^*}$. Now, for any $t \in \mathbb{Q}$, we will prove the following lemma.

\begin{lemma}
\label{Lemma-stab}
For any $t \in \mathbb{Q}$, we have
$$
\widetilde{S_t}\E_{\w^*}
=\E_{\w^*}\widetilde{S_t}.
$$ 
\end{lemma}


\begin{proof}
For any $t \in \mathbb{Q}$, using the weak* continuity of $\widetilde{S_t}$ we have
\begin{align*}
\MoveEqLeft
  \widetilde{S_t}\left( \bigcap_{V \in \mathcal{V}_{\mathbb{Q}}(0)} \ovl{\{\widetilde{S}_s: s \in V \}}^{\w^*}\right) \widetilde{S_t}^{-1}
		=\bigcap_{V \in \mathcal{V}_{\mathbb{Q}}(0)} \widetilde{S_t} \ovl{\{\widetilde{S}_s: s \in V \}}^{\w^*} \widetilde{S_t}^{-1}\\
		&= \bigcap_{V \in \mathcal{V}_{\mathbb{Q}}(0)}\ovl{\widetilde{S_t} \{\widetilde{S_s} : s \in V \} \widetilde{S_t}^{-1}}^{\w^*}
		=\bigcap_{V \in \mathcal{V}_{\mathbb{Q}}(0)} \ovl{\{\widetilde{S_s}: s \in V \}}^{\w^*}.
\end{align*}
This implies that
$$
\widetilde{S_t}\bigg(\Conv \bigcap_{V \in \mathcal{V}_{\mathbb{Q}}(0)} \ovl{\{\widetilde{S}_s: s \in V \}}^{\w^*}\bigg)\widetilde{S_t}^{-1}
\subset \Conv \bigcap_{V \in \mathcal{V}_{\mathbb{Q}}(0)} \ovl{\{\widetilde{S}_s: s \in V \}}^{\w^*}
$$
and finally by using again the weak* continuity of $\widetilde{S_t}$
$$
\widetilde{S_t} \mathcal{S}^{\w^*}(\widetilde{S}) \widetilde{S_t}^{-1} 
=\mathcal{S}^{\w^*}(\widetilde{S}).
$$
So the map $T \mapsto \widetilde{S_t} T \widetilde{S_t}^{-1}$ induces an automorphism of the semigroup $\mathcal{S}^{\w^*}(\widetilde{S})$. But any automorphism of the semigroup $\mathcal{S}^{\w^*}(\pi)$ preserves the least ideal, i.e. the kernel $\{\E_{\w^*}\}$ of $\mathcal{S}^{\w^*}(\widetilde{S})$. We deduce that $\widetilde{S_t} \E_{\w^*} \widetilde{S_t}^{-1}=\E_{\w^*}$ for any $t \in \mathbb{Q}$.
\end{proof}

By \cite[Theorem 2.22]{AbA}, we deduce that $\ran \E_{\w^*}$ is invariant under the operator $\widetilde{S_t}$ for any $t \in \Q$. By Proposition \ref{prop-weak-continuity-not-destroyed}, the range $\Ran (\widetilde{J})$ of the map $\widetilde{J} \co M \to \widetilde{N}$ is contained in the subspace $\widetilde{N}_{\w^*}$ of continuously translated elements of $\widetilde{N}$ of the representation $\widetilde{S} \co \Q \to \B\big(\widetilde{N}\big)$, $t \mapsto \widetilde{S}_t$. Now, we consider the von Neumann algebra $N=\widetilde{N}_{\w^*}$ equipped with the restriction $\psi$ of the normal state $\varphi$ and for any $t \in \mathbb{Q}$ we introduce the $*$-automorphism preserving the states
$$
U_t
=\widetilde{S}_{t}|\widetilde{N}_{\w^*} \co \widetilde{N}_{\w^*} \to \widetilde{N}_{\w^*}.
$$
Note that the map $U \co \mathbb{Q} \to \B(N)$, $t \mapsto U_t$ is a representation and is continuous where $\B(N)$ is equipped with the point weak* topology. Finally, we let $J \co M \to \widetilde{N}_{\w^*}=N$ be the canonical $*$-monomorphism which is a $(\phi,\psi)$-Markov map and $\E=J^* \co N \to M$ be the associated normal faithful conditional expectation. 
$$
 \xymatrix @R=1cm @C=2cm{
\widetilde{N}\ar[r]^{\widetilde{S}_{t}}\ar@/^2pc/[r]^{\E_{\w^*}}&\widetilde{N}\ar@/^3pc/[dd]^{\widetilde{\E}}\\
    \widetilde{N}_{\w^*}=N   \ar@{^{(}->}[u]   \ar[r]^{U_t=\widetilde{S}_{t}|\widetilde{N}_{\w^*}}  &N=\widetilde{N}_{\w^*} \ar[d]^{\E}\ar@{^{(}->}[u] \\
    M    \ar[r]_{T_{t}}\ar[u]^{J} \ar@/^3pc/[uu]^{\widetilde{J}}&  M
}
$$
Using \eqref{Grosse-dilation}, it is (really) not difficult to see that for any $t \in \mathbb{Q}^+$ we have
$$
T_t
=\E U_tJ,\qquad t \in \mathbb{Q}^+.
$$
By point weak* continuity, the map $U \co \mathbb{Q} \to \B(N)$, $t \mapsto U_t$ can be extended to a continuous map $U \co \R \to \B(N)$ where $\B(N)$ is equipped the point weak* topology. Consider some fixed $t \in \R$. There exists a sequence $(t_k)$ of elements of $\Q$ which converges to $t$. Since $U_{t_k} \co N \to N$ is a homomorphism, for any $x,x' \in N$ and any $y \in N_*$, we have
$$
\big\langle y,U_{t_k}(xx') \big\rangle_{N_*,N}
=\big\langle y, U_{t_k}(x)U_{t_k}(x') \big\rangle_{N_*,N}.
$$
By letting $k$ tend to infinity, we deduce that $U_{t}(xx')=U_{t}(x)U_{t}(x')$. Similarly, for any $x \in N$ and any $t \in \R$, we prove that $U_t(x^*)=U_t(x)^*$, the linearity of $U_t$ and that $U_t$ preserves the state $\psi$. For any $x \in N$, any $y \in N_*$ and any $t,t' \in \mathbb{Q}$ we have
$$
\big\langle y,U_{t+t'}(x) \big\rangle_{N_*,N}
=\big\langle y, U_tU_{t'}(x) \big\rangle_{N_*,N}.
$$
It is not difficult to deduce by approximation that $U \co \R \to \B(N)$ is a representation, i.e. that for any $t,t' \in \R$ we have 
$$
U_{t+t'}
=U_t U_{t'}.
$$
We deduce that $U_tU_{-t}=U_{-t}U_t=U_0=\Id_{N}$. Hence each $U_t$ is a $*$-automorphism of $N$. We conclude that $t \mapsto U_t$ defines a weak* continuous group $(U_t)_{t \in \R}$ of $*$-automorphisms of $N$ leaving $\psi$ invariant. 

For any $x \in M$ and any $y \in M_*$, we obtain
$$
\big\langle y,T_t(x) \big\rangle_{M_*,M}
=\big\langle y,\E U_tJ(x) \big\rangle_{M_*,M} \qquad t \in \R^+,
$$
since both sides are continuous functions of $t \in \R^+$ and the above equality is valid for the dense subset $\mathbb{Q}^+$ of $\R^+$. We conclude that
$$
T_t
=\E U_tJ,\qquad t \in \R^+.
$$
\end{proof}

\begin{rk}
\label{Remarque-finite} 
It is obvious that if $M$ is a von Neumann algebra equipped with a faithful finite normal trace then $N$ is also equipped with a faithful finite normal trace. See \cite[Section 6.1]{AnH} for related things.
\end{rk}

\begin{rk}
\label{Remarque-QWEP} 
We refer to \cite{AHW}, \cite{AnH}, \cite{CL} and \cite{Oza} for QWEP von Neumann algebras. 
We say \cite[Definition 1.2]{Arh2} that a $\phi$-Markov map $T \co M \to M$ is QWEP-factorizable if the definition of factorizability of the introduction is satisfied with a QWEP von Neumann algebra $N$. Similarly, we say \cite[Definition 1.3]{Arh2} that a weak* continuous semigroup $(T_t)_{t \geq 0}$ of $\phi$-Markov maps is QWEP-dilatable if the definition \ref{Def QWEP dilatable} is satisfied with a QWEP von Neumann algebra $N$. It is easy to see that if each operator $T_t$ is QWEP-factorizable then the semigroup $(T_t)_{t \geq 0}$ is QWEP-dilatable. Indeed, for any integer $n$ the proof of \cite[Theorem 4.4]{HaM} gives a QWEP von Neumann algebra $N_{\frac{1}{n}}$ (use \cite[Proposition 4.1 (ii)b and (iii)]{Oza}). Now, if each $N_{\frac{1}{n}}$ has $\QWEP$, then by \cite[Lemma 4.3]{AHW} the ultraproduct $\widetilde{N}=(N_{\frac{1}{n}},\varphi_{\frac{1}{n}})^\ul$ has also $\QWEP$. Finally, we conclude that $N=\widetilde{N}_{\w^*}$ has QWEP by \cite[Proposition 4.1 (ii)]{Oza}. This remark is useful for applications of dilations to the theory \cite{Jun2} of vector-valued noncommutative $\L^p$-spaces associated to QWEP von Neumann algebras.
\end{rk}

\begin{rk}
\label{Remarque-hyperfinite} 
Let $M$ be a von Neumann algebra equipped with a normal faithful state $\phi$. Recall that we say \cite[Definition 1.2]{Arh2} that a $\phi$-Markov map $T \co M \to M$ is hyper-factorizable if the definition of factorizability of the introduction is satisfied with a hyperfinite von Neumann algebra $N$. We say that a weak* continuous semigroup $(T_t)_{t \geq 0}$ of $\phi$-Markov maps on $M$ is hyper-dilatable if the definition \ref{Def QWEP dilatable} is satisfied with a hyperfinite von Neumann algebra $N$. Then the following open question is natural. Does every weak* continuous semigroup $(T_t)_{t \geq 0}$ of hyper-factorizable $\phi$-Markov maps on $M$ is hyper-dilatable ? 
By \cite[Proposition 5.5]{Arh1}, the answer is positive for all weak* continuous semigroups of $\tr$-Markov selfadjoint Schur multipliers on $\B(\ell^2_n)$.
\end{rk}

\section{Concrete dilations on von Neumann algebras}
\label{Concrete}

A default of our construction is its non-constructivist nature. Hence a natural problem is to find other \textit{concrete} dilations of particular weak* continuous semigroups of factorizable $\phi$-Markov map. For example, \cite[Proposition 5.5]{Arh1} and \cite{Arh4} describe a concrete dilation for semigroups of selfadjoint Schur multipliers. The $q$-Ornstein-Uhlenbeck semigroup has a obvious dilation, essentially contained in the proof of \cite[Theorem 9.4]{JMX}. In the sequel, we will give other natural dilations of some classical semigroups.

\paragraph{Poisson semigroup on $\R^n$} 
If $\omega_n=\frac{2\pi^{\frac{n}{2}}}{\Gamma(\frac{n}{2})}$ is the usual area of the unit ball of $\R^n$, we can consider the Poisson kernel \cite[page 93]{Fol1} $\p_{\R^n,t}(x)=\frac{2t}{\omega_{n+1}(t^2+|x|^2)^{\frac{n+1}{2}}}$ where $t>0$ and $x \in \R^n$ and where $|\cdot|$ denotes the standard Euclidean norm on $\R^n$. We denote by $(\P_{\R^n,t})_{t \geq 0}$ the Poisson semigroup on $\L^\infty(\R^n)$. For any $t > 0$ and any $f \in \L^\infty(\R^n)$, we have for almost all $x \in \R^n$
$$
\big[\P_{\R^n,t}(f)\big](x)
=(\p_{\R^n,t}*f)(x)
=\int_{\R^n} \frac{2t}{\omega_{n+1}(t^2+|y|^2)^{\frac{n+1}{2}}}f(x-y) \d \mu_{\R^n}(y).
$$
Using the change of variables $y=tu$ in the first equality, we see that for any $t >0$ and almost all $x \in \R^n$
\begin{align}
\MoveEqLeft
\label{Change} 
\big[\P_{\R^n,t}(f)\big](x)     
		=\int_{\R^n} \frac{2t}{\omega_{n+1}(t^2+|tu|^2)^{\frac{n+1}{2}}}f(x-tu) t^n\d \mu_{\R^n}(u)\\
		&=\int_{\R^n} \frac{2}{\omega_{n+1}(1+|u|^2)^{\frac{n+1}{2}}}f(x-tu) \d \mu_{\R^n}(u)
		=\int_{\R^n} f(x-ty) \frac{2}{\omega_{n+1}(1+|y|^2)^{\frac{n+1}{2}}}\d \mu_{\R^n}(y). \nonumber
\end{align}
With the function $g \co \R^n \to \R$, $y \mapsto \frac{2}{\omega_{n+1}(1+|y|^2)^{\frac{n+1}{2}}}$, the computation of \cite[page 93]{Fol1} says that the measure $\nu=g\cdot \mu_{\R^n}$ is a probability measure. So the equality \eqref{Change} is also true for $t=0$.

We introduce a unital normal $*$-monomorphism $J \co \L^\infty(\R^n) \to \L^\infty(\R^n) \otvn \L^\infty(\R^n,\nu)$, $f \mapsto f \ot 1$ which is trace preserving, the associated trace preserving normal faithful conditional expectation $\E \co \L^\infty(\R^n) \otvn \L^\infty(\R^n,\nu) \to \L^\infty(\R^n)$, $f \ot g \mapsto (\int_{\R^n} g \d \nu) f$ and finally for any $t \in \R$, the operator $U_t \co \L^\infty(\R^n) \otvn \L^\infty(\R^n,\nu) \to \L^\infty(\R^n) \ovl{\ot} \L^\infty(\R^n,\nu)$, $g \mapsto ((x,y) \mapsto g(x-ty,y))$. It is left to the reader to check that each $U_t$ is a $*$-automorphism of $\L^\infty(\R^n) \otvn \L^\infty(\R^n,\nu)$. For any positive function $g \in \L^1(\R^n \times \R^n,\mu_{\R^n} \ot \nu) \cap \L^\infty(\R^n \times \R^n,\mu_{\R^n} \ot \nu)$ and any $t \in \R$, we have using Fubini's Theorem twice and a change of variables in the second equality
\begin{align*}
\MoveEqLeft
  \int_{\R^n \times \R^n} \big[U_t(g)\big](x,y) \d \mu_{\R^n}(x) \d \nu(y)    
		=\int_{\R^n} \bigg(\int_{\R^n}  g(x-ty,y)\d \mu_{\R^n}(x)\bigg) \d \nu(y)\\
		&=\int_{\R^n \times \R^n} g(x,y) \d \mu_{\R^n}(x) \d \nu(y).
\end{align*} 
We deduce that $U_t$ is trace preserving. Moreover, for any $t,t' \geq 0$ and any $g \in \L^\infty(\R^n)\ovl{\ot} \L^\infty(\R^n,\nu)$, we have for almost all $(x,y) \in \R^n \times \R^n$
$$
\big[U_tU_{t'}(g)\big](x,y)        
=\big[U_{t'}(g)\big](x-ty,y)
=g\big(x-(t-t')y,y\big)
=\big[U_{t+t'}(g)\big](x,y).
$$
We deduce that $U_tU_{t'}=U_{t+t'}$. It is not difficult to check that the group $(U_t)_{t \in \R}$ is weak* continuous. Now, for any $t \geq 0$ and any $f \in \L^\infty(\R^n)$, we have for almost all $x \in \R^n$
\begin{align*}
\MoveEqLeft
\big[ \E U_tJ(f) \big](x)         
		=\int_{\R^n} \big[U_tJ(f)\big](x,y) \d \nu(y)
		=\int_{\R^n} \big[U_t(f \ot 1)\big](x,y) \d \nu(y)\\
		&=\int_{\R^n} (f \ot 1)(x-ty,y) \d \nu(y)
		=\int_{\R^n} f(x-ty) \d \nu(y)\\
		&=\int_{\R^n} f(x-ty)\frac{2}{\omega_{n+1}(1+|y|^2)^{\frac{n+1}{2}}} \d \mu_{\R^n}(y)
		=\big[\P_{\R^n,t}(f)\big](x).
\end{align*}
So we obtain a dilation of the semigroup $(\P_{\R^n,t})_{t \geq 0}$ for a suitable variant of Definition \ref{Def QWEP dilatable} for semifinite von Neumann algebras. 


\paragraph{Poisson semigroup on $\T^n$} 
Here we identify $\L^\infty(\T^n)$ with the space consisting of 1-periodic functions in the $n$ canonical directions on $\R^n$. If $t>0$, we introduce the function $\p_{\T^n,t} \co \T^n \to \C$ by 
$$
\p_{\T^n,t}(y)
=\sum_{m \in \Z^n} \e^{-2\pi t|m|} e_m(y), \quad y \in \R^n
$$
where $e_m(y)=\e^{2\i \pi m_1y_1} \cdots \e^{2\i \pi m_ny_n}$ and by $(\P_{\T^n,t})_{t \geq 0}$ the Poisson semigroup on $\L^\infty(\T^n)$. Recall that for any $t > 0$ and any $m \in \Z^n$ we have for almost all $x \in \R^n$
$$
\big[\P_{\T^n,t}(f)\big](x)
=(\p_{\T^n,t}*f)(x)
=\int_{[0,1]^n} \p_{\T^n,t}(s)f(x-y) \d y 
=\sum_{m \in \Z^n} \hat{f}(m)\e^{-2\pi t|m|} e_m(x).
$$
%
%
Recall that it is well-known that the Poisson summation formula,  \cite[(3.2.4)]{Gra1}, gives the following relation for any $t>0$
$$
\p_{\T^n,t}(y)
=\sum_{m \in \Z^n} \p_{\R^n,t}(y+m), \quad y \in \R^n.
$$
Consequently, for any $t > 0$ and any $f \in \L^\infty(\T^n)$, using Weil's formula in the third equality  and \eqref{Change} in the last equality, we have for any $t>0$ and for almost all $x \in \R^n$
\begin{align}
\label{computation-Poisson}
\MoveEqLeft
  \big[\P_{\T^n,t}(f)\big](x)          
		=\int_{[0,1]^n} p_{\T^n,t}(y) f(x-y) \d y
		= \int_{[0,1]^n} \sum_{m \in \Z^n} \p_{\R^n,t}(y+m)f(x-y) \d y\\
		&=\int_{\R^n} \p_{\R^n,t}(y)f(x-y) \d y 
		=\big[\P_{\R^n,t}\big(f\big)\big](x) 
		=\int_{\R^n} f(x-ty) \frac{2}{\omega_{n+1}(1+|y|^2)^{\frac{n+1}{2}}}\d \mu_{\R^n}(y). \nonumber
\end{align}
Now, it is easy to construct a dilation for $(\P_{\T^n,t})_{t \geq 0}$. Indeed, consider the probability measure $\nu=g\cdot \mu_{\R^n}$ introduced in the paragraph concerning the Poisson semigroup on $\R^n$. First note that the equality \eqref{computation-Poisson} is also true for $t=0$. Now, consider the trace preserving unital normal $*$-monomorphism $J \co  \L^\infty(\T^n) \to \L^\infty(\T^n) \otvn \L^\infty(\R^n,\nu)$, $f \mapsto f \ot 1$, the associated trace preserving normal faithful conditional expectation $\E \co \L^\infty(\T^n) \otvn \L^\infty(\R^n,\nu) \to \L^\infty(\T^n)$, $f \ot g \mapsto (\int_{\R^n} g \d \nu) f$ and finally for any $t \in \R$, the operator $U_t \co \L^\infty(\T^n) \otvn \L^\infty(\R^n,\nu) \to  \L^\infty(\T^n) \ovl{\ot} \L^\infty(\R^n,\nu)$, $g \mapsto ((x,y) \mapsto g(x-ty,y))$. It is left to the reader to check that these operators define a dilation of $(\P_{\T^n,t})_{t \geq 0}$.

\begin{rk}
In these particular cases, note that the induced dilations on the associated $\L^p$-spaces of both examples gives beautiful alternatives to the dilations provided by Fendler's theorem \cite{Fen1}. The question of finding dilations without ultraproduct techniques is implicitly raised in \cite[page 475]{HvNVW2}.
\end{rk}

\paragraph{Noncommutative Poisson semigroup on the von Neumann algebra $\VN(\F_n)$ of the free group $\F_n$} Let $n \geq 1$ be an integer. We denote by $\F_n$ a free group with $n$ generators denoted by $g_1,\ldots,g_n$. Any $s \in \F_n$ has a unique decomposition of the form
$$
s=g_{i_1}^{k_1}g_{i_2}^{k_2}\cdots g_{i_l}^{k_l},
$$
where $l \geq 0$ is an integer, each $i_j$ belongs to $\{1,\ldots,n\}$, each $k_j$ is a non-zero integer, and $i_j \not=i_{j+1}$ if $1\leq j \leq l-1$. The case when $l=0$ corresponds to the unit element $s=e_{\F_n}$. By definition, the length of $s$ is defined as $|s|=|k_1|+\cdots+|k_l|$. This is the number of factors in the above decomposition of $s$. For any nonnegative real number $t \geq 0$, we have a normal unital completely positive map $
 \P_{\F_n,t} \co \VN(\F_n) \to  \VN(\F_n)$, $\lambda_s \to \e^{-t|s|}\lambda_s$
These maps define a weak* continuous semigroup $(\P_{\F_n,t})_{t \geq 0}$ called the noncommutative Poisson semigroup, see \cite[Chapter 10]{JMX} and \cite{Haa1} for more information. In \cite[pages 107-108]{JMX}, it is proved that $\P_{\F_n,t}$ identifies with the free product $\ast_{1 \leq k \leq n}\P_{\T,\frac{t}{2 \pi}}$. Using free products of maps and the dilation of $(\P_{\T,t})_{t \geq 0}$ it is easy to construct a dilation for $(\P_{\F^n,t})_{t \geq 0}$.


\paragraph{Semigroups of hamiltonians} In this subsection, $M$ is a von Neumann algebra equipped with a normal faithful semifinite trace $\tau$. Let $L$ be a selfadjoint operator affiliated with $M$. Consider the Hamiltonian semigroup $(T_t)_{t \geq 0}$ of operators defined on $M$ by $T_t=\e^{-t(\ad L)^2}$ where
$$
(\ad L)(x)
=Lx-xL,\quad x \in \Dom(\ad L).
$$
It is known that $(T_t)_{t \geq 0}$ is a weak* continuous semigroup of completely positive maps, see \cite[Section 8.4]{JuX}, \cite[Section 8.B]{JMX} and \cite[Example 30.1]{Par1}. Moreover, if $g$ is a Gaussian variable on some probability space $(\Omega,\mu)$ with mean zero and variance $\sqrt 2$, by \cite[Section (8.2)]{JuX} we have the following formula for any $t \geq 0$
$$
T_t(x)
= \E \big[\e^{\i \sqrt{t}  gL} x\, \e^{-\i \sqrt{t}gL}\big],\quad x \in M
$$
where $\i^2=-1$ and where $\E$ denotes the expectation with respect to $\Omega$.

We introduce the von Neumann algebra $N=\L^\infty(\Omega)\otvn M$ equipped with the faithful semifinite normal trace $\tau_N=(\int_{\Omega}\cdot\ \d\mu) \ot \tau$. Note that, by \cite[page 41]{BLM}, we have a $*$-isomorphism $N=\L^\infty(\Omega,M)$. We consider the canonical trace preserving normal unital $*$-monomorphism $J \co M \to \L^\infty(\Omega) \otvn M$, $x \mapsto 1 \ot x$ and the associated trace preserving normal faithful conditional expectation $\E \co \L^\infty(\Omega) \otvn M \to M$, $f \ot x \mapsto (\int_{\Omega} \cdot \d \mu) x $. For any $t \geq 0$, we define the element $D_t$ of $N=\L^\infty(\Omega,M)$ by
$$
D_t(\omega)
=\e^{\i \sqrt{t}g(\omega)L}.
$$
Note that each $D_t$ is a unitary element of $N=\L^\infty(\Omega,M)$. Now, for any $t \geq 0$ we define the $*$-automorphism of $U_t \co \L^\infty(\Omega,M) \to \L^\infty(\Omega,M)$, $f \mapsto D_t fD_t^*$ of $N$. For any positive element $f \in \L^1(N) \cap N $ and any $t \in \R$, we have 
\begin{align*}
\MoveEqLeft
 \int_{\Omega} \tau  \big(\big[U_t(f)\big](\omega)\big) \d\mu(\omega)      
		=\int_{\Omega} \tau\big(\big[D_t fD_t^*\big](\omega)\big) \d\mu(\omega)\\
		&=\int_{\Omega} \tau\big(D_t(\omega) f(\omega)D_t^*(\omega)\big) \d\mu(\omega)
		=\int_{\Omega} \tau\big(f(\omega)\big) \d\mu(\omega).
\end{align*} 
We deduce that each map $U_t$ is trace preserving. It is not difficult to check that the group $(U_t)_{t \in \R}$ is weak* continuous. Finally, for any $x \in M$ and any $t \geq 0$, we have
\begin{align*}
\MoveEqLeft
 \E U_tJ(x)  
   =\E U_t(1\ot x)
	=\int_{\Omega} U_t(1\ot x)\d\mu(\omega)  
	=\int_{\Omega}  D_t(\omega) (1 \ot x)D_t(\omega)^*\d\mu(\omega) \\
   &= \int_{\Omega} \e^{\i \sqrt{t}g(\omega)L}(1 \ot x) \e^{-\i \sqrt{t}g(\omega)L}\d\mu(\omega)
	=\E \big[\e^{\i \sqrt{t} gL} x\, \e^{-\i \sqrt{t}gL}\big]
	=T_t(x).
\end{align*}
For any $t\geq 0$, we conclude that
$$
T_t
=\E U_tJ.
$$

%

%
%
\section{Dilations of semigroups on noncommutative $\L^p$-spaces}
\label{sec:dilations-Lp}

The goal is to prove Theorem \ref{thm-dilation-Fendler} below which is a noncommutative $\L^p$ variant of Theorem \ref{thm-dilation-semigroup-weak-star2}. Suppose $1 \leq p<\infty$. Recall the definition of \cite[page 239]{JLM} which says that a contraction $T \co \L^p(M) \to \L^p(M)$ on a noncommutative $\L^p$-space $\L^p(M)$ is dilatable if there exist a noncommutative $\L^p$-space $\L^p(N)$, two contractions $J \co \L^p(M) \to \L^p(N)$ and $P \co \L^p(N) \to \L^p(M)$ and an isometry $U \co \L^p(N) \to \L^p(N)$ such that $T^k=PU^kJ$ for any integer $k \geq 0$. Note that Akcoglu's theorem implies that any positive contraction on a commutative $\L^p$-space $\L^p(\Omega)$ is dilatable. Now, we introduce a variant. 

\begin{defi}
Suppose $1\leq p<\infty$. We say that a completely positive contraction $T \co \L^p(M) \to \L^p(M)$ on a noncommutative $\L^p$-space $\L^p(M)$ is completely positively dilatable if there exist a noncommutative $\L^p$-space $\L^p(N)$, two completely positive contractions  $J \co \L^p(M) \to \L^p(N)$ and $P \co \L^p(N) \to \L^p(M)$ and a completely positive invertible isometry $U \co \L^p(N) \to \L^p(N)$ with $U^{-1}$ completely positive such that 
$$
T^k=PU^kJ,\qquad k \geq 0.
$$
\end{defi}

\begin{rk}
Note that a dilatable $\phi$-Markov $T \co M \to M$ on a von Neumann algebra $M$ equipped with a normal faithful state $\phi$ induces a completely positively dilatable contraction $T_p \co \L^p(M) \co \to \L^p(M)$ on the associated noncommutative $\L^p$-space $\L^p(M)$. 
\end{rk}

In this section, we use Banach ultraproducts. The same method that the  beginning of the proof of Theorem \ref{thm-dilation-semigroup-weak-star2} together with the stability of the class of noncommutative $\L^p$-spaces under Banach ultraproducts \cite{Ray1} gives the following result.
\begin{lemma}
\label{lemma-dilation-on-Q}
Suppose $1 \leq p <\infty$. Consider a (not necessarily strongly continuous) semigroup $(T_t)_{t \geq 0}$ of completely positive contractions on $\L^p(M)$ such that each operator $T_t$ is completely positively dilatable. Then there exists a noncommutative $\L^p$-space $\L^p(\widetilde{N})$, a group $(\widetilde{S}_t)_{t \in \Q}$ of completely positive invertible isometries of $\L^p(\widetilde{N})$, two completely positive contractions $\widetilde{J} \co \L^p(M) \to \L^p(\widetilde{N})$ and $\widetilde{P} \co \L^p(\widetilde{N}) \to \L^p(M)$ such that 
$$
T_{t} 
=\widetilde{P} \widetilde{S}_{t} \widetilde{J}, \qquad  t \in \mathbb{Q}^+.
$$
\end{lemma}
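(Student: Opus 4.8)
The plan is to transplant the opening of the proof of Theorem~\ref{thm-dilation-semigroup-weak-star2} to the level of noncommutative $L^p$-spaces, replacing the Ocneanu ultraproduct of von Neumann algebras by the Banach-space ultraproduct of noncommutative $L^p$-spaces, for which the corresponding stability is \cite{Ray1}; a pleasant by-product is that no $\sigma$-finiteness is required. We work with $1<p<\infty$, so that $L^p(M)$ is reflexive. First, for each $n\geq 1$ the hypothesis applied to $T_{\frac{1}{n}}$ provides a noncommutative $L^p$-space $L^p(N_{\frac{1}{n}})$, completely positive contractions $J_{\frac{1}{n}}\co L^p(M)\to L^p(N_{\frac{1}{n}})$ and $P_{\frac{1}{n}}\co L^p(N_{\frac{1}{n}})\to L^p(M)$, and a completely positive invertible isometry $S_{\frac{1}{n}}$ of $L^p(N_{\frac{1}{n}})$ with $(T_{\frac{1}{n}})^k=P_{\frac{1}{n}}(S_{\frac{1}{n}})^kJ_{\frac{1}{n}}$ for all $k\geq 0$. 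For $t\in\Q$ put $S_{\frac{1}{n},t}=(S_{\frac{1}{n}})^{nt}$ if $nt\in\Z$ and $S_{\frac{1}{n},t}=\mathrm{Id}$ otherwise; since inverses and powers of a completely positive invertible isometry of a noncommutative $L^p$-space are again completely positive invertible isometries, each $S_{\frac{1}{n},t}$ is one. Finally, construct the ultrafilter $\ul$ on $\nn$ exactly as in the proof of Theorem~\ref{thm-dilation-semigroup-weak-star2}, so that $U_B:=\{n\in\nn:nt\in\Z\ \text{for all}\ t\in B\}$ lies in $\ul$ for every finite $B\subset\Q$.

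Next I form the Banach ultraproducts $L^p(\widetilde N):=(L^p(N_{\frac{1}{n}}))^\ul$ and $(L^p(M))^\ul$; by \cite{Ray1} both are noncommutative $L^p$-spaces, with the natural order structure defined entry-wise on all matrix levels. I form the ultraproduct maps $(J_{\frac{1}{n}})^\ul\co (L^p(M))^\ul\to L^p(\widetilde N)$, $(P_{\frac{1}{n}})^\ul\co L^p(\widetilde N)\to (L^p(M))^\ul$, and $\widetilde U_t:=(S_{\frac{1}{n},t})^\ul$ for $t\in\Q$; since matrix amplification commutes with the ultraproduct, $(J_{\frac{1}{n}})^\ul$ and $(P_{\frac{1}{n}})^\ul$ are completely positive contractions and each $\widetilde U_t$ is a completely positive invertible isometry of $L^p(\widetilde N)$. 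As $L^p(M)$ is reflexive, the weak limit along $\ul$ exists for every bounded family, giving a completely positive contractive projection $Q_M\co (L^p(M))^\ul\to L^p(M)$ onto the diagonal copy of $L^p(M)$ (each cone $M_k(L^p(M))_+$ being weakly closed, and $Q_M$ acting entry-wise). With $\Delta_M\co L^p(M)\to(L^p(M))^\ul$ the diagonal embedding, I set
\[
\widetilde J:=(J_{\frac{1}{n}})^\ul\circ\Delta_M,\qquad \widetilde P:=Q_M\circ(P_{\frac{1}{n}})^\ul,
\]
which are completely positive contractions between the relevant noncommutative $L^p$-spaces.

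It then remains to check three things. (i) \emph{Group law:} for $t,t'\in\Q$ and $n\in U_{\{t,t'\}}$ one has $nt,nt',n(t+t')\in\Z$, hence $S_{\frac{1}{n},t+t'}=(S_{\frac{1}{n}})^{n(t+t')}=S_{\frac{1}{n},t}S_{\frac{1}{n},t'}$; since $U_{\{t,t'\}}\in\ul$ this yields $\widetilde U_{t+t'}=\widetilde U_t\widetilde U_{t'}$, while $\widetilde U_0=\mathrm{Id}$, so $t\mapsto\widetilde U_t$ is a group homomorphism from $\Q$ into the completely positive invertible isometries of $L^p(\widetilde N)$. (ii) \emph{Dilation identity:} for $t\in\Q^+$, $x\in L^p(M)$ and $n\in U_{\{t\}}$ one has $nt\in\nn\cup\{0\}$ and $P_{\frac{1}{n}}S_{\frac{1}{n},t}J_{\frac{1}{n}}x=P_{\frac{1}{n}}(S_{\frac{1}{n}})^{nt}J_{\frac{1}{n}}x=(T_{\frac{1}{n}})^{nt}x=T_tx$ by the semigroup law; since $U_{\{t\}}\in\ul$, the sequence $(P_{\frac{1}{n}}S_{\frac{1}{n},t}J_{\frac{1}{n}}x)_n$ and the constant sequence $(T_tx)_n$ represent the same element of $(L^p(M))^\ul$, whence
\[
\widetilde P\,\widetilde U_t\,\widetilde J x=Q_M\big((P_{\frac{1}{n}}S_{\frac{1}{n},t}J_{\frac{1}{n}}x)^\ul\big)=Q_M\big((T_tx)^\ul\big)=T_tx,
\]
which is the asserted identity on $\Q^+$. (iii) \emph{QWEP:} if $M$ has $\QWEP$, the von Neumann algebras underlying the $L^p(N_{\frac{1}{n}})$ can be chosen $\QWEP$ (as in Remark~\ref{Remarque-QWEP}), and $\QWEP$ is inherited by the von Neumann algebra underlying the Banach ultraproduct $(L^p(N_{\frac{1}{n}}))^\ul$ by the argument of \cite[Lemma~4.3]{AHW} together with \cite[Proposition~4.1]{Oza}; hence $\widetilde N$ has $\QWEP$.

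The only step requiring real care is the construction of $\widetilde P$: unlike the Ocneanu ultraproduct, a Banach ultraproduct of $L^p$-spaces carries no canonical norm-one projection onto the diagonal for free, so one must use reflexivity to produce $Q_M$ and then verify that it is completely positive and compatible with matrix amplifications and with the ultraproduct. One should also record the routine fact used above that inverses and powers of completely positive invertible isometries of a noncommutative $L^p$-space are again such (in every instance arising here the isometry comes from a $*$-automorphism, making this immediate), and keep track that the operator identities hold only on sets belonging to $\ul$ — which is exactly why the memberships $U_{\{t\}},U_{\{t,t'\}}\in\ul$ were engineered into $\ul$.
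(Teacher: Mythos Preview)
Your proof is correct and follows exactly the approach the paper indicates: the paper gives no detailed argument for this lemma, merely stating that the opening of the proof of Theorem~\ref{thm-dilation-semigroup-weak-star2} carries over with Banach ultraproducts (via \cite{Ray1}) replacing Ocneanu ultraproducts. Your added care in constructing the completely positive projection $Q_M$ via weak limits on the reflexive space $(L^p(M))^{\ul}$, and in flagging that $S_{\frac{1}{n}}^{-1}$ must be completely positive, simply fills in details the paper leaves implicit.
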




One more time, if the semigroup $(T_t)_{t\geq 0}$ is strongly continuous, the above ultraproduct construction yields a too large space $\L^p(\widetilde{N})$. So we cannot expect that the representation $\widetilde{U} \co t \mapsto \widetilde{U}_{t}$ of $\mathbb{Q}$ to be continuous on $\L^p(\widetilde{N})$. However, it is still possible to restrict $t \mapsto \widetilde{U}_{t}$ to a smaller subspace on which the desired continuity holds. We skip the end of the proof of the following result.


\begin{thm}
\label{thm-dilation-Fendler}
Suppose $1< p < \infty$. Let $(T_t)_{t \geq 0}$ be a strongly continuous semigroup of completely positive contractions on a noncommutative $\L^p$-space $\L^p(M)$ such that each $T_t \co \L^p(M) \to \L^p(M)$ is completely positively dilatable. Then there exists a noncommutative $\L^p$-space $\L^p(N)$, a strongly continuous group of completely positive isometries $U_t \co \L^p(N) \to \L^p(N)$ and two completely positive contractions $J \co \L^p(M) \to \L^p(N)$ and $P \co \L^p(N) \to \L^p(M)$ such that
\begin{equation*}
	\label{equa-dilatation-semigroupe}
	T_t=PU_tJ,\qquad t \geq 0.
\end{equation*}
\end{thm}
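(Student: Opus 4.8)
The plan is to run the argument of Theorem~\ref{thm-dilation-semigroup-weak-star2} in the category of noncommutative $L^p$-spaces, using Banach space ultraproducts instead of Ocneanu ultraproducts and invoking the De Leeuw--Glicksberg type result Theorem~\ref{DLG} in place of Theorem~\ref{thm-Batkai}. First I would apply Lemma~\ref{lemma-dilation-on-Q} to $(T_t)_{t\geq 0}$, obtaining a noncommutative $L^p$-space $L^p(\widetilde{N})$ (with $\widetilde{N}$ having $\QWEP$ whenever $M$ does), a group $(\widetilde{U}_t)_{t\in\Q}$ of completely positive invertible isometries of $L^p(\widetilde{N})$, and completely positive contractions $\widetilde{J}\co L^p(M)\to L^p(\widetilde{N})$ and $\widetilde{P}\co L^p(\widetilde{N})\to L^p(M)$ with $T_t=\widetilde{P}\widetilde{U}_t\widetilde{J}$ for all $t\in\Q^+$. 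Since $1<p<\infty$, the space $L^p(\widetilde{N})$ is reflexive, so $\widetilde{U}\co\Q\to B(L^p(\widetilde{N}))$ is a bounded representation of the commutative topological group $\Q$ (with the topology induced from $\R$) on a reflexive Banach space, and Theorem~\ref{DLG} provides a bounded projection $Q$ of $L^p(\widetilde{N})$ onto the subspace $L^p(\widetilde{N})_c$ of continuously translating vectors, commuting with every $\widetilde{U}_t$.

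Next I would establish the two facts that make the restriction work. On one hand, $Q$ lies in the weak operator closure of the convex hull of $\bigcap_{V\in\mathcal{V}(0)}\ovl{\{\widetilde{U}_t: t\in V\}}^{wo}$; since each $\widetilde{U}_t$ is a completely positive isometry and both complete positivity and complete contractivity pass to convex combinations and to weak operator limits, $Q$ is a completely positive contractive projection, so $L^p(N):=\Ran Q=L^p(\widetilde{N})_c$ is a completely isometric, completely order subspace of $L^p(\widetilde{N})$. Here comes the point I expect to be the real obstacle: one must identify $L^p(N)=\Ran Q$, with the inherited operator space structure, as a genuine noncommutative $L^p$-space attached to some von Neumann algebra $N$, which relies on structural results for ranges of completely positive contractive projections on noncommutative $L^p$-spaces ($1<p<\infty$), and one must check that $N$ inherits $\QWEP$ from $\widetilde{N}$. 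On the other hand, since noncommutative $L^p$-spaces are uniformly convex by \cite[Corollary~5.2]{PiX}, the local uniform convexity lemma proved above (generalizing \cite[Lemma~3]{Fen1}), applied with $X=L^p(M)$, $Y=L^p(\widetilde{N})$, the strongly continuous semigroup $(T_t)_{t\geq 0}$, the group of isometries $(\widetilde{U}_t)_{t\in\Q}$ and the factorization $T_t=\widetilde{P}\widetilde{U}_t\widetilde{J}$ on $\Q^+$, shows that $t\mapsto\widetilde{U}_t\widetilde{J}x$ is norm-continuous on $\Q$ for every $x\in L^p(M)$; hence $\Ran\widetilde{J}\subset L^p(\widetilde{N})_c=L^p(N)$.

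Finally I would restrict, extend and verify the factorization. Because $Q$ commutes with the $\widetilde{U}_t$, the subspace $L^p(N)$ is invariant under each $\widetilde{U}_t$, so $U_t:=\widetilde{U}_t|_{L^p(N)}$ is a completely positive invertible isometry of $L^p(N)$, and $(U_t)_{t\in\Q}$ is a group which, by the very definition of $L^p(\widetilde{N})_c$, is strongly continuous on $L^p(N)$; a routine density argument (the $U_t$ being isometries and $\Q$ being dense in $\R$) extends it to a strongly continuous group $(U_t)_{t\in\R}$ of completely positive invertible isometries of $L^p(N)$. Taking $J\co L^p(M)\to L^p(N)$ to be $\widetilde{J}$ corestricted (legitimate since $\Ran\widetilde{J}\subset L^p(N)$) and $P\co L^p(N)\to L^p(M)$ to be the restriction of $\widetilde{P}$, both completely positive contractions, one gets $PU_tJ=\widetilde{P}\widetilde{U}_t\widetilde{J}=T_t$ for $t\in\Q^+$, and this extends to $T_t=PU_tJ$ for every $t\geq 0$ by strong continuity of $(T_t)_{t\geq 0}$ and of $t\mapsto U_t$ together with the density of $\Q^+$ in $\R^+$; the $\QWEP$ assertion for $N$ follows from the one carried over in Lemma~\ref{lemma-dilation-on-Q}, once the $\QWEP$ transfer through $Q$ alluded to above is in hand. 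Apart from that transfer and the identification of $\Ran Q$ with a noncommutative $L^p$-space, the argument is a direct transcription of the proof of Theorem~\ref{thm-dilation-semigroup-weak-star2}.
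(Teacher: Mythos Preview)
Your proposal is correct and follows essentially the same route as the paper's proof: invoke Lemma~\ref{lemma-dilation-on-Q}, apply Theorem~\ref{DLG} to the reflexive space $L^p(\widetilde{N})$ to get the commuting projection $Q_c$ onto $L^p(\widetilde{N})_c$, use Lemma~\ref{lemma-continuit�-uniform-convexity} to place $\Ran\widetilde{J}$ inside $L^p(\widetilde{N})_c$, observe that $Q_c\in\mathcal{S}^c(\widetilde{U})$ is completely positive and contractive, and then restrict. The ``real obstacle'' you single out---identifying $\Ran Q_c$ with a genuine noncommutative $L^p$-space---is exactly the point where the paper invokes \cite{ArR}; you are also more explicit than the paper about the density extension from $\Q$ to $\R$ and about the $\QWEP$ transfer from $\widetilde{N}$ to $N$, which the paper's proof does not spell out.
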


\begin{proof}
By Lemma \ref{lemma-dilation-on-Q}, we obtain a representation $\widetilde{S} \co \mathbb{Q} \to \B(\L^p(\widetilde{N}))$ by completely positive isometric operators. We have
$$
T_{t} 
=\widetilde{P} \widetilde{S}_{t} \widetilde{J}, \qquad  t \in \mathbb{Q}^+.
$$
By Theorem \ref{DLG}, we deduce that the kernel $\mathcal{K}(\widetilde{S})$ of $\mathcal{S}^c(\widetilde{S})$ contains a unique element $Q_c$ and that $Q_c \co \L^p(\widetilde{N}) \to \L^p(\widetilde{N})$ is a projection from the Banach space $\L^p(\widetilde{N})$ onto the subspace $\L^p(\widetilde{N})_c$ of continuously translated elements. Furthermore, we have
$$
\widetilde{S}_tQ_c
=Q_c\widetilde{S}_t,\qquad t \in \mathbb{Q}.
$$
By \cite[Theorem 2.22]{AbA}, the range $\L^p(\widetilde{N})_c$ of the projection $Q_c$ is invariant under the operator $\widetilde{S}_t$ for any $t \in \Q$. Moreover, we infer from Lemma \ref{lemma-continuité-uniform-convexity} that the range $\Ran (\widetilde{J})$ of the map $\widetilde{J}$ given by Lemma \ref{lemma-dilation-on-Q} is contained in the subspace $\L^p(\widetilde{N})_c$ of continuously translated elements of $\L^p(\widetilde{N})$ of the representation $\widetilde{S}$. Furthermore, since each operator $\widetilde{S}_t$ is isometric and completely positive, hence contractive, we see that the convex semigroup $\mathcal{S}^c(\widetilde{S})$ of $\widetilde{S}$ over the identity consists of completely positive contractions only by using \cite[Lemma 2.6]{ArK} and the weak lower semicontinuity of the norm. It follows that $Q_c$ is also contractive and completely positive and consequently that the subspace $\L^p(\widetilde{N})_c$ is 1-completely positively complemented in $\L^p(\widetilde{N})$, hence completely isometric and completely order isomorphic to a noncommutative $\L^p$-space $\L^p(N)$ by the main result of \cite{ArR}. Now, for any $t \in \mathbb{Q}$ we define the completely positive and isometric map
\begin{equation*}
\label{def-de-U}
	U_t
	=\widetilde{S}_{t}|\L^p(\widetilde{N})_c \co \L^p(\widetilde{N})_c \to\L^p(\widetilde{N})_c.
\end{equation*}
Note that the canonical map $J \co \L^p(M) \to \L^p(\widetilde{N})_c=\L^p(N)$ of $\L^p(M)$ is contractive and completely positive. Finally, we consider the restriction $P=\widetilde{P}|\L^p(N) \co \L^p(N) \to \L^p(M)$ of $\widetilde{P}$ on $\L^p(N)$. 
So we have the following diagram. 
$$
 \xymatrix @R=1cm @C=2cm{
\L^p(\widetilde{N})\ar[r]^{\widetilde{S}_{t}}\ar@/^2pc/[r]^{Q_c}&\L^p(\widetilde{N})\ar@/^4pc/[dd]^{\widetilde{P}}\\
    \L^p(\widetilde{N})_c=\L^p(N)   \ar@{^{(}->}[u]   \ar[r]^{U_t=\widetilde{S}_{t}|\L^p(\widetilde{N})_c}  &\L^p(N)=\L^p(\widetilde{N})_c \ar[d]^{P}\ar@{^{(}->}[u] \\
    \L^p(M)    \ar[r]_{T_{t}}\ar[u]^{J} \ar@/^4pc/[uu]^{\widetilde{J}}&  \L^p(M)
}
$$
Now, it is left to the reader to finish the proof. 
%
\end{proof}


\begin{rk} Suppose $1<p<\infty$ with $p\not=2$. Note that there exists a completely positive contractive map $T \co \S^p \to \S^p$ which does not admit an isometric dilation on a noncommutative $\L^p$-space, see \cite{JLM}. It would be interesting (if it does occur) to find a completely positive contractive Schur multiplier $M_A \co \S^p \to \S^p$ without isometric dilation on a noncommutative $\L^p$-space or a completely positive contractive Fourier multiplier $M_t \co \L^p(\VN(G)) \to \L^p(\VN(G))$ without isometric dilation (on a necessarily nonabelian group $G$ due to Akcoglu's theorem). See also \cite{Arh1}, \cite{ALM} and \cite{AFM} for more information on dilations on noncommutative $\L^p$-spaces.
\end{rk}

\section{Semigroups of selfadjoint Fourier multipliers}
\label{sec:exemples}

As we said in the introduction, Haagerup and Musat \cite[Theorem 4.4]{HaM} have characterised dilatable Markov maps. Indeed, they proved that if $T \co M \to M$ is a $\phi$-Markov map on a von Neumann algebra $M$ equipped with a state $\phi$ then $T$ is dilatable if and only if $T$ is factorizable in the sense of \cite{AnD}. This result allows us to give concrete examples of dilatable semigroups.

Suppose that $G$ is a discrete group. Recall that we denote by $e$ the neutral element of $G$. We denote by $\lambda_s \co \ell^2_G \to \ell^2_G$ the unitary operator of left translation by $s$ and $\VN(G)$ the von Neumann algebra of
$G$ spanned by the $\lambda_s$'s where $s \in G$. It is an finite von Neumann algebra with its canonical faithful normal finite trace given by
$$
\tau_{G}(x)
=\big\langle\epsi_{e},x(\epsi_{e})\big\rangle_{\ell^2_G}
$$
where $(\epsi_s)_{s \in G}$ is the canonical basis of $\ell^2_G$ and $x \in \VN(G)$. A Fourier multiplier is a normal linear map $T \co \VN(G) \to \VN(G)$ such that there exists a complex function $t \co G \to \C$ such that $T(\lambda_s)=t_s\lambda_s$ for any $s \in G$. In this case, we denote $T$ by $M_t \co  \VN(G) \to \VN(G)$. It is well-known that a Fourier multiplier $M_t \co \VN(G) \to \VN(G)$ is completely positive if and only if the function $t$ is positive definite. It is easy to see that a $\tau_G$-Markov Fourier multiplier $M_t \co \VN(G) \to \VN(G)$ is selfadjoint if and only if $t \co G\to \C$ is a real function.

Using the factorisability of selfadjoint $\tau_G$-Markov Fourier multipliers of \cite{Ric} (see \cite{ArK} for a generalization), we deduce the following result:
\begin{cor}
\label{thm-Dilation-semigroup-Fourier-multipliers}
Let $G$ be a discrete group. Let $(T_t)_{t \geq 0}$ be a weak* continuous semigroup of selfadjoint $\tau_G$-Markov Fourier multipliers on the von Neumann algebra $\VN(G)$. Then the semigroup $(T_t)_{t \geq 0}$ is dilatable. 
\end{cor}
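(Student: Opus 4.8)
The plan is to obtain this as a direct corollary of the main theorem, Theorem~\ref{thm-dilation-semigroup-weak-star}, so the whole argument reduces to checking that its hypotheses are met by $(T_t)_{t\geq 0}$. First I would observe that $(\VN(G),\tau_G)$ is a von Neumann algebra equipped with a normal faithful finite trace, hence a normal faithful state (and automatically $\sigma$-finite). Since $\tau_G$ is a trace, its modular automorphism group $(\sigma_t^{\tau_G})_{t\in\R}$ is trivial, so condition~$(4)$ of Definition~\ref{def-Markov} holds vacuously for every normal unital completely positive map. Thus each $T_t$, being a normal unital $\tau_G$-preserving completely positive Fourier multiplier, is a $\tau_G$-Markov map; by hypothesis it is moreover selfadjoint, which as noted in the text means its symbol $t\co G\to\C$ (where $T_t=M_t$) is real-valued and positive definite.

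The one substantial ingredient is factorizability. Here I would invoke the result of Ricard~\cite{Ric}: every selfadjoint $\tau_G$-Markov Fourier multiplier on $\VN(G)$ is factorizable in the sense of \cite{AnD}. Applying this to each $T_t$ shows that $(T_t)_{t\geq 0}$ is a $w^*$-continuous semigroup of factorizable $\tau_G$-Markov maps on $\VN(G)$, which is exactly the hypothesis of Theorem~\ref{thm-dilation-semigroup-weak-star}.

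Consequently Theorem~\ref{thm-dilation-semigroup-weak-star} applies and produces a von Neumann algebra $N$ with a normal faithful state $\psi$, a $w^*$-continuous group $(U_t)_{t\in\R}$ of $\psi$-Markov $*$-automorphisms of $N$, and a $(\tau_G,\psi)$-Markov $*$-monomorphism $J\co\VN(G)\to N$ such that $T_t=\mathbb{E}U_tJ$ for all $t\geq 0$, where $\mathbb{E}=J^*$ is the associated state-preserving conditional expectation; this is precisely the assertion that $(T_t)_{t\geq 0}$ is dilatable. I do not expect any genuine obstacle: the argument is entirely a verification of hypotheses, and the only non-formal point—that selfadjoint Markov Fourier multipliers are factorizable, which is not automatic since by Haagerup--Musat not all Markov maps are factorizable—is supplied by \cite{Ric}.
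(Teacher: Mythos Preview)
Your proposal is correct and matches the paper's approach exactly: the paper deduces the corollary in one line by invoking Ricard's factorizability result \cite{Ric} for selfadjoint $\tau_G$-Markov Fourier multipliers and then applying Theorem~\ref{thm-dilation-semigroup-weak-star}. Your additional remarks about the modular group being trivial and the verification of Definition~\ref{def-Markov} are correct elaborations of what the paper leaves implicit.
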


We refer to the preprint \cite{Arh4} for another approach which can be used for the class of unimodular locally compact groups.

%

%
%
%

\section{Applications to $\H^\infty$ functional calculus}
\label{sec:Applications}

We start with a little background on sectoriality and $\H^\infty$ functional calculus. We refer to \cite{Haa}, \cite{KW}, \cite{JMX}, \cite{HvNVW2} and \cite{Arh2} for details and complements. Let $X$ be a Banach space. A closed densely defined linear operator $A \co \Dom(A)\subset X \to X$ is called sectorial of type $\omega$ if its spectrum $\sigma(A)$ is included in the closed sector $\overline{\Sigma_\omega}$, and for any angle $\omega<\theta < \pi$, there is a positive constant $K_\theta$ such that
\begin{equation*}\label{Sector}
 \bnorm{(\lambda-A)^{-1}}_{X\to X}\leq
\frac{K_\theta}{\vert  \lambda \vert},\qquad \lambda
\in\mathbb{C}-\overline{\Sigma_\theta}.
\end{equation*}
If $-A$ is the negative generator of a strongly continuous bounded semigroup on a $X$ then $A$ is sectorial of type $\frac{\pi}{2}$. By \cite[Example 10.1.3]{HvNVW2}, sectorial operators of type $<\frac{\pi}{2}$ coincide with negative generators of bounded analytic semigroups.

For any $0<\theta<\pi$, let $\H^{\infty}(\Sigma_\theta)$ be the algebra of all bounded analytic functions $f \co  \Sigma_\theta\to \C$, equipped with the supremum norm $\norm{f}_{\H^{\infty}(\Sigma_\theta)}=\,\sup\bigl\{\vert f(z)\vert \, :\, z\in \Sigma_\theta\bigr\}$. Let $\H^{\infty}_{0}(\Sigma_\theta)\subset \H^{\infty}(\Sigma_\theta)$ be the subalgebra of bounded analytic functions $f \co \Sigma_\theta\to \C$ for which there exist $s,c>0$ such that $\vert f(z)\vert\leq \frac{c \vert z \vert^s}{(1+\vert z \vert)^{2s}}$ for any $z \in \Sigma_\theta$. Finally, we let $\H^\infty_0(\Sigma_{\theta+})=\cup_{\omega>\theta} \H^\infty_0(\Sigma_{\theta})$. 

Given a sectorial operator $A$ of type $0< \omega < \pi$, a bigger angle $\omega<\theta<\pi$, and a function $f\in \H^{\infty}_{0}(\Sigma_\theta)$, one may define a bounded operator $f(A)$ by means of a Cauchy integral (see e.g. \cite[Section 2.3]{Haa} or \cite[Section 9]{KW}). The resulting mapping $\H^{\infty}_{0}(\Sigma_\theta) \to \B(X)$ taking $f$ to $f(A)$ is an algebra homomorphism. By definition, $A$ has a bounded $\H^{\infty}(\Sigma_\theta)$ functional calculus provided that this homomorphism is bounded, that is if there exists a positive constant $C$ such that $\bnorm{f(A)}_{X \to X} \leq C\norm{f}_{\H^{\infty}(\Sigma_\theta)}$ for any $f \in \H^{\infty}_{0}(\Sigma_\theta)$. In the case where $A$ has a dense range, the latter boundedness condition allows a natural extension of $f\mapsto f(A)$ to the full algebra $\H^{\infty}(\Sigma_\theta)$.

Suppose $1 \leq p<\infty$. In the sequel, we say that a sectorial operator $A$ on a vector-valued noncommutative $\L^p$-space $\L^p(M,E)$ admits a completely bounded $\H^{\infty}(\Sigma_\theta)$ functional calculus if $\Id_{\mathrm{S}^p} \ot A$ admits a bounded $\H^{\infty}(\Sigma_\theta)$ functional calculus on the vector-valued Schatten space $S^p(\L^p(M,E))$.

Using the connection between the existence of dilations in UMD spaces and $\H^\infty$ functional calculus together with the well-known angle reduction principle of Kalton-Weis relying on R-sectoriality, Theorem \ref{thm-dilation-Fendler} allows us to recover the last page of the memoir \cite[page 125]{JMX}:

\begin{thm}
\label{Th-funct-calculus}
Let $M$ be a von Neumann algebra equipped with a normal faithful state $\phi$. Let $(T_t)_{t \geq 0}$ be a weak* continuous semigroup of selfadjoint factorizable $\phi$-Markov maps on $M$. Suppose $1<p<\infty$. We let $-A_p$ be the generator of the induced strongly continuous semigroup $(T_{t,p})_{t\geq 0}$ on the Banach space $\L^p(M)$. Then for any $\theta>\pi|\frac{1}{p}-\frac{1}{2}|$, the operator $A_p$ has a completely bounded $\H^{\infty}(\Sigma_\theta)$ functional calculus.
\end{thm}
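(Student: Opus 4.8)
The plan is to reduce the statement to the combination of three ingredients already available: the dilation theorem for noncommutative $L^p$-spaces (Theorem~\ref{thm-dilation-Fendler}), the standard transfer principle from dilations in UMD spaces to bounded $H^\infty$ functional calculus, and the angle-reduction principle of \cite{JMX}. First I would observe that each $T_t \co M \to M$ is a factorizable $\phi$-Markov map, hence dilatable by \cite[Theorem~4.4]{HaM}, and therefore induces a completely positively dilatable contraction on $L^p(M)$ for every $1<p<\infty$ (the remark following the definition of completely positive dilatability, which uses \cite[Lemma~2.4]{AnD}). Moreover, since $(T_t)_{t\geq 0}$ is $w^*$-continuous on $M$, the induced semigroup on $L^p(M)$ is strongly continuous (this is the standard interpolation/duality argument: $w^*$-continuity on $M$ gives weak continuity on $L^p$, which for a semigroup upgrades to strong continuity). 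Thus the hypotheses of Theorem~\ref{thm-dilation-Fendler} are met, and we obtain a noncommutative $L^p$-space $L^p(N)$, a strongly continuous group $(U_t)_{t\in\R}$ of completely positive isometries on $L^p(N)$, and completely positive contractions $J \co L^p(M)\to L^p(N)$ and $P \co L^p(N)\to L^p(M)$ with $T_t = P U_t J$ for all $t\geq 0$.

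Next I would invoke the fact that $L^p(N)$ is a UMD Banach space \cite[Corollary~7.7]{PiX}, and apply the transference result \cite[Corollary~10.9]{KW}: if a strongly continuous semigroup on a UMD space dilates to a strongly continuous group of isometries, then its negative generator has a bounded $H^\infty(\Sigma_\theta)$ functional calculus for every $\theta > \frac{\pi}{2}$. Concretely, the generator $-B$ of $(U_t)_{t\in\R}$ on $L^p(N)$ is the generator of a bounded (in fact isometric) group, so $B$ has a bounded $H^\infty(\Sigma_\theta)$ calculus for each $\theta>\pi/2$; transferring through the factorization $T_t = PU_tJ$ (which respects the Cauchy-integral definition of $f(A_p)$ because $P$, $J$ are contractions intertwining the semigroups) yields that $A_p$ itself has a bounded $H^\infty(\Sigma_\theta)$ calculus for every $\theta>\pi/2$. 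This already gives the $H^\infty$ calculus, but only up to angle $\pi/2$, whereas the claimed bound is the sharp $\theta > \pi|\tfrac1p-\tfrac12|$.

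To sharpen the angle I would then apply the angle-reduction principle \cite[Proposition~5.8]{JMX}. The point is that $(T_t)_{t\geq 0}$ is not just any semigroup: it is completely positive and (via the $w^*$-continuous $\phi$-Markov structure) it extends compatibly to all $L^p(M)$, $1<p<\infty$, and in particular it is selfadjoint-compatible across the scale in a way that lets one interpolate the functional-calculus angle. Since the semigroup acts contractively and completely positively on the whole interpolation scale $\{L^p(M)\}_{1<p<\infty}$, \cite[Proposition~5.8]{JMX} converts the angle $\pi/2$ bound valid at all $p$ into the optimal angle $\pi|\tfrac1p-\tfrac12|$ at the given $p$, by interpolating between the (formally) angle-zero bound at $p=2$ — where complete positivity plus the trace/state invariance make the semigroup a contraction semigroup of selfadjoint operators on the Hilbert space $L^2(M)$, hence analytic of angle $0$ — and the angle-$\pi/2$ bounds at the endpoints.

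The main obstacle I expect is the last step: one must make sure the semigroup genuinely extends to a consistent family on the full scale $L^p(M)$ with the complete positivity and contractivity needed to feed into \cite[Proposition~5.8]{JMX}, and that at $p=2$ one really does get a selfadjoint contraction semigroup (this uses that a $\phi$-Markov map is selfadjoint-compatible with the $L^2$ inner product only after the KMS-type symmetrization; if the $T_t$ are not $\phi$-symmetric one instead argues via the $2$-sided bound, but the factorizability hypothesis combined with \cite[Theorem~4.4]{HaM} gives enough structure). Granting that, the three quoted results slot together and the proof is essentially a bookkeeping exercise; the genuinely new input of this paper is Theorem~\ref{thm-dilation-Fendler}, which supplies the dilation that the functional-calculus machinery then exploits.
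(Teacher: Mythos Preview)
Your proposal is correct and follows essentially the same route as the paper: the paper's proof is a one-line sketch invoking exactly the three ingredients you name --- Theorem~\ref{thm-dilation-Fendler} for the dilation, \cite[Corollary~10.9]{KW} for the $H^\infty$ calculus at angle $>\pi/2$ in UMD spaces, and \cite[Proposition~5.8]{JMX} for the angle reduction to $\pi|\tfrac1p-\tfrac12|$. Your final-paragraph worry about $L^2$-selfadjointness is not something the paper addresses separately; the hypotheses of \cite[Proposition~5.8]{JMX} are met by the consistent family of completely positive contractions that a $\phi$-Markov semigroup induces on the full $L^p$-scale, so no extra symmetry argument is needed.
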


\begin{proof}
Using Theorem \ref{thm-dilation-semigroup-weak-star}, with obvious notations, note that we have a dilation of the strongly continuous semigroup $(\Id_{\S^p} \ot T_{t,p})_{t \geq 0}$ acting on the Banach space $\S^p(\L^p(M))$: 
$$
\Id_{\S^p} \ot T_{t,p}
=(\Id_{\S^p} \ot \E_{p})(\Id_{\S^p} \ot U_{t,p})(\Id_{\S^p} \ot J_p)
$$
by a strongly continuous group $(\Id_{\S^p} \ot U_{t,p})_{t \in \R}$ of isometries acting on $\S^p(\L^p(N))$. Recall that a noncommutative $\L^p$-space is a UMD Banach space for any $1 < p< \infty$ by \cite[Corollary 7.7]{PiX}. Hence $\S^p(\L^p(N))$ is UMD. Now, transference \cite[Corollary 10.9]{KW} gives the existence of some $0<\theta<\pi$ such that $\Id_{S^p} \ot A_p$ admits a bounded $\H^{\infty}(\Sigma_\theta)$ functional calculus. Now, we reduce the angle and conclude with \cite[Proposition 5.8]{JMX} since each $T_t$ is selfadjoint.
\end{proof}

This theorem is applicable to any weak* continuous semigroup $(T_t)_{t \geq 0}$ of selfadjoint $\tau_G$-Markov Fourier multipliers on the von Neumann algebra $\VN(G)$ of a discrete group $G$.

Theorem \ref{Th-funct-calculus} combined with the proof of \cite[Theorem 10.4.16]{HvNVW2} and some results of \cite{JMX} describing square functions in the noncommutative setting, imply Theorem \ref{Th-equivalence-square functions} below in the spirit of \cite[Corollary 7.7]{JMX} and \cite[Corollary 7.10]{JMX}. We skip the details but we explain some notations. Let $(\Omega,\mu)$ be a $\sigma$-finite measure space. By a subpartition of $\Omega$, we mean a finite set $\pi=\{I_1,\ldots, I_m\}$ of pairwise disjoint measurable subsets of $\Omega$ such that $0<\mu(I_i)<\infty$ for any $1\leq i\leq m$. Let $X$ be a Banach space and let $\pi$ be a subpartition of $\Omega$. For any $f \in \L^p(\Omega,X)$ we let
\begin{equation*}
f_\pi
=\sum_{i=1}^{m} \frac{1}{\mu(I_i)}\biggl(\int_{I_i}
f \biggr)\, 1_{I_i}.
\end{equation*}
Here $1_I$ denotes the indicator function if $I$. Finally, we can consider some limits $\lim_\pi$ if subpartitions are directed by refinement. We refer to \cite{JMX} for more details and for the links with noncommutative square functions.

\begin{thm}
\label{Th-equivalence-square functions}
Let $M$ be a von Neumann algebra equipped with a normal faithful state $\phi$. Let $(T_t)_{t \geq 0}$ be a weak* continuous semigroup of selfadjoint factorizable $\phi$-Markov maps on $M$. We let $-A_p$ be the generator of the strongly continuous semigroup $(T_{t,p})_{t\geq 0}$ induced on the noncommutative $\L^p$-space $\L^p(M)$. Let $F$ be a function of $\H^\infty_0(\Sigma_{\theta+})-\{0\}$.
\begin{enumerate}
	\item Suppose $1 < p \leq 2$. Then for any $x \in \Dom(A_p) \cap \Ran(A_p)$, we have an equivalence
	\begin{align*}
  \norm{x}_{\L^p(M)}
	&\approx \inf \Bigg\{ \lim_\pi \Bgnorm{\left(\int_{0}^{\infty} \big(F(tA_p)x_1\big)_\pi^*\big(F(tA_p)x_1\big)_\pi\frac{\d t}{t}\right)^{\frac{1}{2}}}_{\L^p(M)}+ \\
		& \lim_\pi \Bgnorm{\left(\int_{0}^{\infty} \big(F(tA_p)x_1\big)_\pi\big(F(tA_p)x_1\big)_\pi^*\frac{\d t}{t}\right)^{\frac{1}{2}}}_{\L^p(M)}\Bigg\},
\end{align*} 
where the infimum runs over all $x_1,x_2 \in \L^p(M)$ such that $x=x_1+x_2$.	
	\item Suppose $2 \leq p <\infty$. Then for any $x \in \Dom(A_p) \cap \Ran(A_p)$, we have an equivalence
	\begin{align*}
\norm{x}_{\L^p(M)}
&\approx \max\Bigg\{ \lim_{\substack{\alpha\to 0\\ \beta\to +\infty}} \Bgnorm{\left(\int_{\alpha}^{\beta} \big(F(tA_p)x\big)^*\big(F(tA_p)x\big)\frac{\d t}{t}\right)^{\frac{1}{2}}}_{\L^p(M)},\\
		&\lim_{\substack{\alpha\to 0\\ \beta\to +\infty}}\Bgnorm{\left(\int_{\alpha}^{\beta} \big(F(tA_p)x\big)\big(F(tA_p)x\big)^*\frac{\d t}{t}\right)^{\frac{1}{2}}}_{\L^p(M)}\Bigg\}.
\end{align*} 
	
\end{enumerate}
\end{thm}

Suppose $1 \leq p \leq \infty$. Recall that the vector-valued noncommutative space $\L^p(M,E)$ is well-defined \cite{Pis5} if $M$ is a hyperfinite semifinite von Neumann algebra and if $E$ is an operator space. The ideas of the manuscript \cite{Jun2} (which unfortunately seems definitely postponed) allows to define $\L^p(M,E)$ beyond the hyperfinite case for a QWEP von Neumann algebra $M$ and a locally-$\mathrm{C}^*(\F_{\infty})$ operator space $E$. Using Remark \ref{Remarque-QWEP}, we can give vector-valued variants of Theorem \ref{Th-funct-calculus} using the results of \cite{Arh2} (see also \cite{Arh3} for related things) and classical principles. 

For that, we need of the operator space analog $\OUMD_p$ \cite[Definition 4.8]{Pis5} of the Banach space classical property UMD \cite[Definition 4.2.1]{HvNVW2} and a more contraignant variant introduced in \cite{Arh2}. Suppose $1< p< \infty$. Let $E$ be a locally-$\mathrm{C}^*(\F_{\infty})$ operator space. We say that $E$ is $\OUMD_p'$ if there exists a positive constant $K$ such that for any QWEP von Neumann algebra $M$ equipped with a normal faithful state, any positive integer $n$, any finite martingale $(x_k)_{0 \leq k \leq n}$ in $\L^p(M,E)$ relative to a filtration $(M_k)_{0 \leq k \leq n}$ and any choice of signs $\epsi_1,\ldots,\epsi_{n}\in \{\pm 1\}$ we have
$$
\Bgnorm{\sum_{k=1}^n \epsi_k \d x_k}_{\L^p(M, E)}
\leq K \Bgnorm{\sum_{k=1}^n \d x_k}_{\L^p(M,E)}.
$$
The property $\OUMD_p$ is defined similarly but with hyperfinite and finite von Neumann algebras. Finally, for any index set $I$, we denote by $\mathrm{OH}(I)$ the associated operator Hilbert space introduced by Pisier. Finally, recall that the definition of $\QWEP$-factorizability is given in Remark \ref{Remarque-QWEP}.

Then we can obtain the following theorem by transference. 

\begin{thm}
\label{Th-funct-calculus-B}
Let $M$ be a $\QWEP$ von Neumann algebra equipped with a normal faithful state $\phi$. Let $(T_t)_{t \geq 0}$ be a weak* continuous semigroup of $\QWEP$-factorizable $\phi$-Markov maps on $M$. Let $E$ be an $\OUMD_p$ locally-$\mathrm{C}^*(\F_{\infty})$ operator space. Suppose $1<p<\infty$. We let $-A_p$ be the generator of the induced strongly continuous semigroup $(T_{t,p} \ot \Id_E)_{t \geq 0}$ on the Banach space $\L^p(M,E)$. Then the operator $A_p$ has a completely bounded $\H^{\infty}(\Sigma_\theta)$ functional calculus for some $0<\theta <\pi$.
\end{thm}

\begin{proof}
With obvious notations, observe that we have a dilation of the strongly continuous semigroup $(\Id_{\S^p} \ot T_{t,p} \ot \Id_E)_{t \geq 0}$ acting on $\S^p(\L^p(M,E))$: 
$$
\Id_{\S^p} \ot T_{t,p} \ot \Id_E
=(\Id_{\S^p} \ot \E_{p} \ot \Id_E)(\Id_{\S^p} \ot U_{t,p} \ot \Id_E)(\Id_{\S^p} \ot J_p \ot \Id_E)
$$
by a strongly continuous group $(\Id_{\S^p} \ot U_{t,p} \ot \Id_E)_{t \in \R}$ of isometries acting on $\S^p(\L^p(N,E)))$ where $N$ is a $\QWEP$ von Neumann algebra. By \cite[Proposition 3.12]{Mus1}, the operator space $\S_I^p(E)$ is $\OUMD_p$ for any index set $I$. So by \cite[Proposition 3.12]{Mus1} again, the operator space $(\S^p_I(E))^\ul$ is also $\OUMD_p$. Finally, by definition of $\L^p(N,E)$, we infer that the Banach space $\S^p(\L^p(N,E)))$ is UMD. Now, we conclude by transference \cite[Corollary 10.9]{KW}.
\end{proof}

Combining with the result \cite[Theorem 1.4]{Arh2}\footnote{\thefootnote. Note that the word ``selfadjoint'' is missing in the assumptions of \cite[Theorem 1.4, Theorem 1.6 and Theorem 3.6]{Arh2}.} we obtain the following result. We skip the details.

\begin{thm}
Let $M$ be a $\QWEP$ von Neumann algebra equipped with a normal faithful state. Let $(T_t)_{t\geq 0}$ be a weak* continuous semigroup of selfadjoint $\QWEP$-factorizable $\phi$-Markov on $M$. Suppose $1< p,q<\infty$ and $0<\alpha<1$. Let $E$ be an operator space such that $E=\big(\mathrm{OH}(I),F\big)_\alpha$ for some index set $I$ and for some $\OUMD_{q}'$ operator space $F$ with $\frac{1}{p}=\frac{1-\alpha}{2}+\frac{\alpha}{q}$. We let $-A_p$ be the generator of the strongly continuous semigroup $(T_{t,p} \ot \Id_{E})_{t \geq 0}$ on the vector-valued noncommutative $\L^p$-space $\L^p(M,E)$. Then for some $0<\theta<\frac{\pi}{2}$, the operator $A_p$ has a completely bounded $\H^{\infty}(\Sigma_\theta)$ functional calculus.
\end{thm}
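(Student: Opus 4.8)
The plan is to deduce the statement from the vector-valued variant of \cite[Theorem 1.4]{Arh2} by producing the required dilation automatically. First, by Remark \ref{Remarque-QWEP}, the $\QWEP$-factorizability of each $T_t$ forces the whole semigroup $(T_t)_{t\geq 0}$ to be $\QWEP$-dilatable: there exist a $\QWEP$ von Neumann algebra $N$ equipped with a normal faithful state $\psi$, a $w^*$-continuous group $(U_t)_{t\in\R}$ of $\psi$-Markov $*$-automorphisms of $N$ and a $(\phi,\psi)$-Markov $*$-monomorphism $J\co M\to N$ such that $T_t=\mathbb{E}U_tJ$ for all $t\geq 0$, where $\mathbb{E}=J^*\co N\to M$ is the associated conditional expectation.

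Next I would transfer this dilation to the vector-valued level. Since $J$ is $(\phi,\psi)$-Markov, $\mathbb{E}$ is $(\psi,\phi)$-Markov and each $U_t$ is $\psi$-Markov, they induce complete contractions $J\ot\mathrm{Id}_E\co L^p(M,E)\to L^p(N,E)$ and $\mathbb{E}\ot\mathrm{Id}_E\co L^p(N,E)\to L^p(M,E)$ with $(\mathbb{E}\ot\mathrm{Id}_E)(J\ot\mathrm{Id}_E)=\mathrm{Id}_{L^p(M,E)}$, together with surjective complete isometries $U_t\ot\mathrm{Id}_E$ of $L^p(N,E)$. The $w^*$-continuity of $(U_t)_{t\in\R}$ yields a strongly continuous group of isometries on $L^p(N)$ and hence, since elementary tensors are dense in $L^p(N,E)$ and the operators $U_t\ot\mathrm{Id}_E$ are uniformly bounded, a strongly continuous group $(U_t\ot\mathrm{Id}_E)_{t\in\R}$ of complete isometries on $L^p(N,E)$, which dilates $(T_t\ot\mathrm{Id}_E)_{t\geq 0}$:
$$
T_t\ot\mathrm{Id}_E=(\mathbb{E}\ot\mathrm{Id}_E)(U_t\ot\mathrm{Id}_E)(J\ot\mathrm{Id}_E),\qquad t\geq 0.
$$
It is worth noting that at the endpoint $\alpha=0$, i.e.\ on the Hilbert space $L^2(N,\mathrm{OH}(I))$, the group $(U_t\ot\mathrm{Id})$ is unitary, so there the compressed semigroup is governed by a positive self-adjoint generator.

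Finally I would run the functional-calculus machinery of \cite{Arh2}. Because $N$ has $\QWEP$ and $E=(\mathrm{OH}(I),F)_\alpha$ with $F$ an $\OUMD_{q}'$ operator space and $\frac1p=\frac{1-\alpha}{2}+\frac{\alpha}{q}$, the space $L^p(N,E)$ is $\OUMD_p$; hence the transference principle applies completely boundedly to the bounded strongly continuous group $(U_t\ot\mathrm{Id}_E)_{t\in\R}$ and yields, on some sector, a completely bounded $H^\infty(\Sigma_\theta)$ functional calculus for the generator of the dilating group. Compressing through $J\ot\mathrm{Id}_E$ and $\mathbb{E}\ot\mathrm{Id}_E$ transfers this bound to $A_p$, and Stein interpolation between the $\mathrm{OH}$-endpoint --- where the generator is positive self-adjoint and thus admits a bounded $H^\infty(\Sigma_\theta)$ calculus for every $\theta>0$ --- and the $\OUMD_{q}'$-endpoint, combined with the angle reduction principle of \cite[Proposition 5.8]{JMX}, pushes the angle strictly below $\frac{\pi}{2}$. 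The main obstacle is not the dilation (now free from Remark \ref{Remarque-QWEP}) but the structural facts that $L^p(N,E)$ is $\OUMD_p$ and that the transference together with the interpolation bookkeeping produce the correct angle; these are precisely the computations already carried out in \cite{Arh2}, so the proof of the vector-valued variant of \cite[Theorem 1.4]{Arh2} applies verbatim.
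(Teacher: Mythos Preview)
Your proposal is correct and follows the paper's intended route: the paper does not spell out a proof but simply notes that Remark \ref{Remarque-QWEP} supplies the $\QWEP$-dilation automatically, so that the hypotheses of the vector-valued variant of \cite[Theorem 1.4]{Arh2} are met and its proof applies verbatim. Your write-up expands exactly this outline---dilation from Remark \ref{Remarque-QWEP}, transfer to $L^p(N,E)$, then the $\OUMD_p$/transference/interpolation machinery from \cite{Arh2}---so it matches the paper's approach.
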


\vspace{0.3cm}

\textbf{Acknowledgements}.
The author would like to thank Yves Raynaud, Magdalena Musat and Yoshimichi Ueda for some discussions and Kay Schwieger for comments on the paper. Finally, we would like to thank the referee for his comments.

\small

\vspace{0.5cm}
\footnotesize{ \noindent 
13 rue Didier Daurat, 81000 Albi, France\\
cedric.arhancet@protonmail.com\\
URL: \href{http://sites.google.com/site/cedricarhancet}{https://sites.google.com/site/cedricarhancet}\hskip.3cm
}

\end{document}